\begin{document}
	\renewcommand{\a}{\alpha}
	\newcommand{\D}{\Delta}
	\newcommand{\ddt}{\frac{d}{dt}}
	\counterwithin{equation}{section}
	\newcommand{\e}{\epsilon}
	\newcommand{\eps}{\varepsilon}
	\newtheorem{theorem}{Theorem}[section]
	
	\newtheorem{proposition}{Proposition}[section]
	\newtheorem{lemma}[proposition]{Lemma}
	\newtheorem{remark}{Remark}[section]
	\newtheorem{example}{Example}[section]
	\newtheorem{definition}{Definition}[section]
	\newtheorem{corollary}{Corollary}[section]
	\makeatletter
	\newcommand{\rmnum}[1]{\romannumeral #1}
	\newcommand{\Rmnum}[1]{\expandafter\@slowromancap\romannumeral #1@}
	\makeatother
	
	\title{Scattering for the fractional magnetic Schr\"odinger operators \footnote{This work was supported by National Natural Science Foundation of China (61671009, 12171178).}}	
	
	\author{Lei Wei\, \, \, Zhiwen Duan$^{\star}$\\
		{\small {\it  School of Mathematics and Statistics, Huazhong University of Science }}\\
		{\small {\it and Technology, Wuhan, {\rm 430074,} P.R.China.}}  \\
		{\small {\it $^{\star}$Corresponding author. Email: duanzhw@hust.edu.cn }}\\
		{\small {\it Contributing author. Email: weileimath@hust.edu.cn}}}
	\date{}
	\maketitle
	
	\centerline{\large\bf Abstract}	
	\par	
	In this paper, we prove the existence of the scattering operator for the fractional magnetic Schr\"odinger operators. For this, we construct the fractional distorted Fourier transforms with magnetic potentials. Applying the properties of the distorted Fourier transforms, the existence and the asymptotic completeness of the wave operators are obtained. Furthermore, we prove the absence of positive eigenvalues for Schr\"odinger operators.

	\noindent{\bf Keywords:} Magnetic Schr\"{o}dinger Operators, Fractional, Scattering, Distorted Fourier transform\\
	
	\par
	\noindent {\bf AMS Subject Classifications 2020:}\ 35Q60, 35R11, 47A40, 42A38\\
	
	\noindent{\bf Authors’ Contributions:} All authors contributed equally to this work. All authors read and approved the final manuscript.\\
	
	
	\noindent{\bf Conflicts of Interest:} The authors declare that they have no conflicts of interest to report regarding the present study.

	\section{Introduction}
	In this paper, we define  	$\mathscr{H}$ is the self-adjoint realization of $(-\Delta_{A})^{\frac{s}{2}}$ for $0<s<2$ in $L^{2}(\mathbb{R}^{n})$, where the magnetic Schr\"odinger operators $-\Delta_{A}$ is described as follows: $$-\Delta_{A}:=-(\nabla+iA)^{2}=-\Delta-iA\cdot\nabla-i\nabla\cdot A+\arrowvert A\arrowvert^{2},$$ and  $A(x)=(A_{1}(x),...,A_{n}(x))\in C^{2}$ is a real-valued vector function which satisfies the following hypothesis:
	\begin{equation}\label{1.1}
		\arrowvert A(x)\arrowvert+\arrowvert\nabla A(x)\arrowvert+\arrowvert\nabla\nabla A(x)\arrowvert\leqslant C\langle x \rangle^{-\beta},
	\end{equation}
	for some $\beta>n+2$.
	
	In addition, we state the properties and results of the operator spectrum. By direct calculation for any $0\neq u\in \mathscr{S}$
	\begin{equation}\label{1.2}
		(-(\nabla+iA)^{2}u,u)_{L^{2}}
		=\int_{\mathbb{R}^{n}}\sum\limits_{j=1}^{n}\arrowvert(\partial_{x_{j}}+iA_{j})u\arrowvert^{2}dx>0,
	\end{equation}
 we have $\sigma(-\Delta_{A})\subset[0,\infty)$.
From \cite{KT} (also see Lemma A.2 in \cite{KK}), we know that the singular continuous spectrum of $-\Delta_{A}$ is  empty, i.e., $\sigma_{sc}(-\Delta_{A})=\emptyset$, and there is no positive eigenvalue, i.e., $\sigma_{pp}(-\Delta_{A})=\emptyset$. In this paper, we also assume that zero is neither an eigenvalue nor a resonance of the magnetic Schr\"odinger operator $-\Delta_{A}$, where zero is called a resonance if there exists a distribution solution of $-\Delta_{A}u=0$ such that $u\in L^{2,-\sigma}$ but  $u\notin L^{2}$ for any $\sigma>\frac{1}{2}$ in  $n\geqslant3$. This definition for resonance differs from the case of dimension $n=2$ in which $u$ lies in $L^{p}$ for some $p\in(2,\infty]$.
	
	\begin{remark}\label{remark1.1}
		For $n=1$, we recall the following fact: let $G(x)$ be a real smooth function and $\mathcal{T}$ be the gauge transformation:
		$$\mathcal{T}u(t,x)=e^{-iG(x)}u(t,x).$$
		Then $\mathcal{T}$ transforms $(-\Delta_{A})u$ into the same one with $A$ being replaced by $A+\nabla_{x}G$. In particular, taking $G(x)=-\int_{0}^{x}A(y)dy$, the potential $A$ is eliminated from $(-\Delta_{A})u$ by the gauge transformation $\mathcal{T}$. Therefore, we only consider the case where the space dimension $n\geqslant2$.
	\end{remark}
		
	In the well known case $-\Delta$, the term \textit{short range} physically means potential $V(x)$ decays fast enough to ensure that a quantum scattering system has locality in a large scale. For example, Agmon \cite{Ag} and H\"{o}rmander \cite{H} proved the spectral and scattering theory for Schr\"{o}dinger operators $\Delta_{V}$ when $V(x)$ satisfies a short range condition, i.e. $V(x)$ decays at infinity like $\sim \arrowvert x\arrowvert^{-1-\varepsilon},\varepsilon>0$. In the recent survey, Schlag \cite{Sc2} established a structure formula for the intertwining wave operators in $\mathbb{R}^{3}$, which as applied to spectral theory. 
	
	It is then natural to ask for the short range scattering theory of $(-\Delta)^\frac s2$ when $s>0$, and such study has already drawn some authors' attention, while understanding the dynamics of fractional Schr\"{o}dinger equations is of physical importance, for example, $\sqrt{-\Delta}+V$ is the relativistic Hamiltonian with mass $0$. Specifically, Giere \cite{g} considered functions of $-\Delta$ and obtained asymptotic completeness based on semigroup difference method, which applies to $(-\Delta)^\frac s2$ in the case $0<s<2$ perturbed by potentials decaying at the rate $(1+|x|)^{-1-\varepsilon}$ in dimension $n>2+2\varepsilon-s$. Kitada \cite{K} considered $s\geqslant1$ and obtained through eigenfunction expansion method that the asymptotic completeness for short range potentials verifying $|V(x)|\leqslant C(1+|x|)^{-1-\varepsilon}$. For the fractional Schr\"{o}dinger operators $(-\Delta)^{\frac{s}{2}}+V$ ($s>0$) and $V(x)$ is a real-valued short range potential, the scattering theory in a Banach space $B\subset L^{2}(\mathbb{R}^{n})$ was studied in \cite{ZHZ}.
	
	For the magnetic Schr\"{o}dinger operators, Demuth and Ouhabaz \cite{DO} proved the existence and completeness of the wave operators $W_{\pm}(A(b),-\Delta)$ for Schr\"{o}dinger operators $A(b)=-(\nabla-ib(x))^{2}$, $x\in\mathbb{R}^{n}$. In addition, the Strichartz and smoothing estimates for the magnetic Schr\"{o}dinger operators were studied, Erdo\v{g}an, Goldberg and Schlag \cite{EGS} proved the Strichartz and smoothing estimates for the Schr\"{o}dinger operators with almost critical magnetic potentials in higher dimensions. D'Ancona, Fanelli, Vega and Visciglia \cite{DFVV} obtained the endpoint Strichartz estimates for the magnetic Schr\"{o}dinger equations. The other results of the Strichartz and smoothing estimates for the magnetic Schr\"{o}dinger operators can be found in \cite{DF,GST}.

	$Outline\ of\ this\ paper:$
	
	In Section 2,  we construct the fractional magnetic distorted Fourier transforms as following
	$$F^{A}_{\pm}u(\xi)=\mathscr{F}((I+V_{x}R^{s}_{0}(\lambda\pm i0))^{-1}u)(\xi),\ s>0.$$ In order to make this construct meaningful, we prove the boundedness of operators $V_{x}$ and $R^{s}_{0}(\lambda\pm i0)$ (see Subsection 2.1 and Subsection 2.2 respectively).
	First, the integral representation of $V_{x}:=(-\Delta_{A})^{\frac{s}{2}}-(-\Delta)^{\frac{s}{2}}$ is the key, i.e. for $0<s<2$, $$V_{x}=c(s)\int^{\infty}_{0}\tau^{\frac{s}{2}}(\tau-\Delta_{A})^{-1}V_{1}(x)(\tau-\Delta)^{-1}d\tau,$$ 
	where $V_{1}(x):=-iA\cdot\nabla-i\nabla\cdot A+\arrowvert A\arrowvert^{2}$. Applying this integral representation, we obtain that $V_{x}$ is a short range perturbation of $\mathscr{H}_{0}$, where $\mathscr{H}_{0}$ is the self-adjoint realization of $(-\Delta)^{\frac{s}{2}}$ for any $s>0$ in $L^{2}(\mathbb{R}^{n})$. Next, we prove some estimates of fractional free resolvent operators $$R^{s}_{0}(\lambda\pm i0)=((-\Delta)^{\frac{s}{2}}-(\lambda\pm i0))^{-1}.$$ Specifically, we establish the bounds of $R^{s}_{0}(\lambda\pm i0)$ from positive weighted space $L^{2,\sigma}$ into negative weighted space $H^{s,-\sigma}$ with $\sigma>\frac{1}{2}$, $s>0$. Therefore, combining these bounds of operators $V_{x}$ and $R^{s}_{0}(\lambda\pm i0)$, the distorted Fourier transforms are well defined.

	In Section 3, we consider the scattering behavior of $(-\Delta_{A})^{\frac{s}{2}}$, thus we study the properties of wave operator. On the one hand, combining some properties of magnetic Shr\"{o}dinger operator and $V_{x}$ is a short range perturbation of $\mathscr{H}_{0}$, the existence of the wave operators is derived. On the other hand, by establishing the relationship between wave operators and distorted Fourier transforms: $$F^{A}_{\pm}W_{\pm}=\mathscr{F},$$ the asymptotic completeness of the wave operators is obtained. 
	
In Section 4, we prove the absence of positive eigenvalues for $(-\Delta_{A})^{\frac{s}{2}}$ with $0<s<2$. Inspired by the well-known fact in \cite{KT}, which is the Shr\"{o}dinger operator $-\Delta_{A}$ has no positive eigenvalues, we have the main result of this section.

Based on all the conclusions, we have the following theorem:
	
	\begin{theorem}\label{theorem1.1}	
		Let $A(x)$ satisfy hypothesis \eqref{1.1}, $\mathscr{H}_\mathrm{ac}$ and $\mathscr{H}_\mathrm{sc}$ respectively denote the absolutely continuous and singular continuous subspaces of $L^2(\mathbb{R}^n)$ with respect to $\mathscr{H}$. Then the wave operators $W_{\pm}$ defined by the strong $L^{2}$ limits
		\begin{equation}\label{1.3}
			W_{\pm}u=\lim_{t\rightarrow\pm\infty}e^{it\mathscr{H}}e^{-it\mathscr{H}_{0}}u,\ u\in L^{2}(\mathbb{R}^{n}),
		\end{equation}
		are asymptotically complete, that is 	
		\begin{equation}\label{1.4}	
			\mathrm{Ran}(W_+)=\mathrm{Ran}(W_-)=\mathscr{H}_\mathrm{ac}\quad\text{and}\quad\mathscr{H}_\mathrm{sc}=\emptyset.
		\end{equation}
		Consequently, the scattering operator $S=W_+^*W_-$ is unitary.	
	\end{theorem}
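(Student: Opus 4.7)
The plan is to synthesize the ingredients of Sections 2--4 into the Birman--Kato stationary scheme, adapted to the fractional magnetic setting, in three stages: existence of $W_\pm$ by Cook's method, an intertwining identity via the distorted Fourier transform, and identification of ranges using the absence of singular spectrum. For existence, differentiating yields
\begin{equation*}
\frac{d}{dt}\bigl(e^{it\mathscr{H}}e^{-it\mathscr{H}_0}u\bigr)=ie^{it\mathscr{H}}V_x e^{-it\mathscr{H}_0}u,
\end{equation*}
so the strong limit exists on a dense set once $\int_0^{\pm\infty}\|V_x e^{-it\mathscr{H}_0}u\|_{L^2}\,dt<\infty$. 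Taking $u$ Schwartz with frequency support in a compact annulus disjoint from the origin, the short-range character of $V_x$ established in Section 2 (via its integral representation and the decay $\beta>n+2$ of $A,\nabla A,\nabla\nabla A$) together with local decay of $e^{-it(-\Delta)^{s/2}}$ yields the required integrability. A standard density argument then extends the limit to all of $L^2(\mathbb{R}^n)$.

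Next I would use the distorted Fourier transform $F^A_\pm u(\xi)=\mathscr{F}\bigl((I+V_x R_0^s(|\xi|^s\pm i0))^{-1}u\bigr)(\xi)$ from Section 2 to establish the intertwining identity $F^A_\pm W_\pm=\mathscr{F}$ advertised in the outline. The proof is a stationary computation: apply $F^A_\pm$ to the time integral defining $W_\pm$, use the diagonalization $F^A_\pm\mathscr{H}=M_{|\xi|^s}F^A_\pm$, and pass to the limit with the limiting absorption principle bound $R_0^s(\lambda\pm i0):L^{2,\sigma}\to H^{s,-\sigma}$ for $\sigma>1/2$. Since $\mathscr{F}$ is an $L^2$-isometry, this identity exhibits $W_\pm$ as an isometry onto $\mathrm{Ran}\bigl((F^A_\pm)^*\bigr)$.

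To conclude, I would show $\mathrm{Ran}\bigl((F^A_\pm)^*\bigr)=\mathscr{H}_{ac}$ and $\mathscr{H}_{sc}=\emptyset$. This requires invertibility of $I+V_x R_0^s(\lambda\pm i0)$ on the appropriate weighted space for every $\lambda>0$, which by the Fredholm alternative reduces to showing that $\lambda$ is not an eigenvalue of $\mathscr{H}$: the absence of positive eigenvalues comes from Section 4, while the no-eigenvalue and no-resonance condition at $\lambda=0$ is a standing hypothesis. Combined with the limiting absorption principle for $\mathscr{H}$, this yields $\sigma_{sc}(\mathscr{H})=\emptyset$ and $\mathscr{H}_{ac}=L^2$, so that $W_\pm$ is unitary onto $\mathscr{H}_{ac}$ and $S=W_+^*W_-$ is unitary as a composition of two such operators.

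The hard part will be the uniform control of $(I+V_x R_0^s(\lambda\pm i0))^{-1}$ as $\lambda$ ranges over $(0,\infty)$. Invertibility at each fixed $\lambda>0$ follows from the Fredholm alternative once positive eigenvalues are excluded, but uniform boundedness on compact subsets of $(0,\infty)$ and the delicate behavior as $\lambda\to 0^+$ (relying on the no-resonance assumption) and as $\lambda\to\infty$ (where the fractional kernel of $R_0^s$ behaves qualitatively differently from the classical Laplacian) require careful weighted estimates. The first-order, non-self-adjoint nature of $V_x$ induced by the magnetic coupling makes these estimates more delicate than in the purely scalar potential case.
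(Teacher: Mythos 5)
Your proposal follows essentially the same route as the paper: Cook's method for existence of $W_\pm$ (Lemma 3.1), the stationary identity $F^A_\pm W_\pm=\mathscr{F}$ proved via the intertwining $F^A_\pm e^{it\mathscr{H}}=e^{it|\xi|^s}F^A_\pm$ and the limiting absorption bounds (Lemmas 2.9 and 3.2), and then identification of ranges using the absence of positive eigenvalues (Theorem 4.1) plus the standing no-eigenvalue/no-resonance hypothesis at zero.

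One small point of presentation worth noting: in the last step you invoke a limiting absorption principle for $\mathscr{H}$ and work directly with $\mathrm{Ran}\bigl((F^A_\pm)^*\bigr)$. The paper instead introduces the exceptional set $\tilde{\Lambda}$ (the spectral values where $I+V_xR_0^s(\lambda\pm i0)$ is not invertible on $L^{2,\sigma}$, together with zero), sets $E^c=\int_{\mathbb{R}\setminus\tilde\Lambda}dE_\lambda$, and deduces $E^cL^2=\mathrm{Ran}(W_\pm)$ from Lemmas 2.9 and 3.2; combined with the general fact $\mathrm{Ran}(W_\pm)\subset\mathscr{H}_{ac}$ and $E^cL^2\supset\mathscr{H}_{ac}\oplus\mathscr{H}_{sc}$, this forces $\mathscr{H}_{sc}=\emptyset$ and $\mathrm{Ran}(W_\pm)=\mathscr{H}_{ac}$ without needing a full perturbed LAP for $\mathscr{H}$. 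Your concern about uniform control as $\lambda\to\infty$ is legitimate but in fact sidestepped in the paper: the identity $F^A_-W_-=\mathscr{F}$ is first verified against test functions $\phi$ compactly supported in the spectral variable (so Lemma 2.7 applies on compact subsets of $(0,\infty)$), and density then extends it to $L^2$.
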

	
\begin{remark}	
In our other article, the application of Theorem \ref{theorem1.1} is crucial for obtaining dispersive estimates for the solutions of nonlinear magnetic Schrödinger equations. These dispersive estimates play a significant role in understanding the asymptotic behavior of Schrödinger operators with magnetic potentials. They provide valuable insights into the spreading properties, scattering behavior, and long-term dynamics of the nonlinear equations.

The dispersive estimates obtained through the application of Theorem \ref{theorem1.1} have wide-ranging implications in various fields, including quantum mechanics, nonlinear partial differential equations (PDEs), and related areas. They contribute to our understanding of the behavior of solutions to nonlinear magnetic Schrödinger equations, shedding light on the effects of magnetic potentials on the dynamics of the system.

Moreover, the dynamics of fractional Schrödinger operators, which are studied in the paper, are of particular importance due to their relevance to anomalous diffusion, fractal structures, disordered media, and non-local interactions. These operators provide a mathematical framework for describing physical phenomena that exhibit fractional or non-local behavior. The analysis of their dynamics is a fascinating area of research in mathematical physics and quantum mechanics, contributing to the understanding of complex systems and their mathematical properties.
\end{remark}

\bigskip

	\phantomsection
	\addcontentsline{toc}{section}{Notation}
	\hspace{-5mm}\textbf{Notation}:
		
		\begin{itemize}
			\setlength{\baselineskip}{1.5\baselineskip}
			
			\item $\mathscr{S}(\mathbb{R}^{n})$ is Schwartz space, i.e. the set of all real- or complex-valued $C^{\infty}$ functions on $\mathbb{R}^{n}$ such that for every nonnegative integer $\kappa$ and every multi-index $\alpha$, $\sup\limits_{x\in\mathbb{R}^{n}}(1+\arrowvert x\arrowvert^{2})^{\frac{\kappa}{2}}\arrowvert D^{\alpha}u(x)\arrowvert<\infty$; $\mathscr{S}'(\mathbb{R}^{n})$ is space of tempered distribution on $\mathbb{R}^{n}$, i.e. the topological dual of $\mathscr{S}(\mathbb{R}^{n})$.

		\item $\mathscr{F}u(x)=\hat{u}=(2\pi)^{-\frac{n}{2}}\int_{\mathbb{R}^{n}}e^{-ix\cdot \xi}u(\xi)d\xi$.
		
		\item $(\mathscr{F}^{-1}u)(x)=\check{u}=(2\pi)^{-\frac{n}{2}}\int_{\mathbb{R}^{n}}e^{ix\cdot \xi}u(\xi)d\xi$.
		
		\item
		$H^{s}(\mathbb{R}^{n})=\{u\in L^{2}(\mathbb{R}^{n}):(1+\arrowvert\xi\arrowvert^{2})^{\frac{s}{2}}\hat{u}\in L^{2}(\mathbb{R}^{n})\}$.

		\item $L^{2,\sigma}(\mathbb{R}^{n})=\{u(x):(1+\arrowvert x\arrowvert^{2})^{\frac{\sigma}{2}}u(x)\in L^{2}(\mathbb{R}^{n})\}$.
		
		\item
		$H^{s,\sigma}(\mathbb{R}^{n})=\{u(x):(I-\Delta)^{\frac{s}{2}}u\in L^{2,\sigma}(\mathbb{R}^{n}), \forall s,\sigma\in\mathbb{R}\}.$

		\item
		$R_{0}(\tau)=(\tau-\Delta)^{-1}$,  $R_{0}^{s}(\tau)=(\tau+(-\Delta)^{\frac{s}{2}})^{-1}$ for $\tau>0$, $s>0$.
		
		\item
		$R_{A}(\tau)=(\tau-\Delta_{A})^{-1}$, $R_{A}^{s}(\tau)=(\tau+(-\Delta_{A})^{\frac{s}{2}})^{-1}$ for $\tau>0$, $s>0$.
	
		\item
		$F_\pm^{A}u(\xi)=\mathscr{F}((I+V_{x}R_0^{s}(\lambda\pm i0 ))^{-1}u)(\xi)$ almost everywhere in $M_\lambda=\{\xi\in\mathbb{R}^n;~\arrowvert\xi\arrowvert^s=\lambda\}$, where  $V_{x}=(-\Delta_{A})^{\frac{s}{2}}-(-\Delta)^{\frac{s}{2}}$, $s>0$.
		
		\item
		$\mathscr{H}_{0}$ is the self-adjoint realization of $(-\Delta)^{\frac{s}{2}}$ for any $s>0$ in $L^{2}(\mathbb{R}^{n})$.
		
		\item
		$\mathscr{H}$ is the self-adjoint realization of $(-\Delta_{A})^{\frac{s}{2}}$ for any $s>0$ in $L^{2}(\mathbb{R}^{n})$.
		
		\item
		$\sigma_{pp}$ is the pure point spectrum, $\sigma_{sc}$ is the singular continuous spectrum,  $\sigma_{ac}$ is the absolutely continuous spectrum.
		
		\item
		$D(-\Delta_{A})$ means the domain of the operator $-\Delta_{A}$.
		
	\end{itemize}

	\section{Construction of the distorted Fourier transforms at spectral points}
	In this section, we construct the fractional magnetic distorted Fourier transforms at spectral points $$F_\pm^{A}u(\xi):=\mathscr{F}((I+V_{x}R_0^{s}(\lambda\pm i0 ))^{-1}u)(\xi)$$
	of fractional magnetic Schr\"{o}dinger operators $(-\Delta_{A})^{\frac{s}{2}}$ for any $0<s<2$. This tool is very useful because it can generate some important properties for which need to be used to prove the existence and asymptotic completeness of the wave operators.
	
	\subsection{Short range perturbations}
	To make this structure meaningful, we should obtain property of the operator $(I+V_{x}R_0^{s}(\lambda\pm i0 ))^{-1}$. First, note $V_{x}:=(-\Delta_{A})^{\frac{s}{2}}-(-\Delta)^{\frac{s}{2}}$, we prove that $V_{x}$ is a short range perturbation of $\mathscr{H}_{0}$, i.e. $V_{x}$ is a bounded and compact operator from $H^{s,-\sigma}$ into $L^{2,\sigma}$ for some $\sigma>\frac{1}{2}$ and any $0<s<2$.
	
	\begin{lemma}\label{lemma2.1}
		Let $A(x)$ satisfy hypothesis \eqref{1.1}, then for $0<s<2$, we have
		\begin{flalign}
			&(-\Delta_{A})^{\frac{s}{2}}u-(-\Delta)^{\frac{s}{2}}u\nonumber\\
			=&c(s)\int^{\infty}_{0}\tau^{\frac{s}{2}}(\tau-\Delta_{A})^{-1}V_{1}(x)(\tau-\Delta)^{-1}ud\tau\nonumber\\
			=&c(s)\int^{\infty}_{0}\tau^{\frac{s}{2}}(\tau-\Delta)^{-1}V_{1}(x)(\tau-\Delta_{A})^{-1}ud\tau,\ \forall u\in D(-\Delta_{A}), \label{2.1}
		\end{flalign}
		where $V_{1}(x):=-iA\cdot\nabla-i\nabla\cdot A+\arrowvert A\arrowvert^{2}$ and  $(c(s))^{-1}=\int^{\infty}_{0}\tau^{\frac{s}{2}-1}(\tau+1)^{-1}d\tau$.		
	\end{lemma}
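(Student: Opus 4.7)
The identity (2.1) is a subordination identity. The plan is to combine the Balakrishnan integral representation of fractional powers with the second resolvent identity relating $-\Delta_A$ and $-\Delta$.

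First, I would start from the scalar identity
\[
\lambda^{s/2} = c(s)\int_0^\infty \tau^{s/2-1}\,\lambda(\tau+\lambda)^{-1}\,d\tau, \qquad \lambda>0,\ 0<s<2,
\]
which follows from the substitution $\tau\mapsto\lambda\tau$ together with the defining integral for $c(s)$. Applying the spectral theorem to the positive self-adjoint operators $-\Delta_A$ and $-\Delta$ (positivity of $-\Delta_A$ comes from \eqref{1.2}), this lifts to the operator identities
\[
(-\Delta_A)^{s/2} u = c(s)\int_0^\infty \tau^{s/2-1}(-\Delta_A)(\tau-\Delta_A)^{-1}u\,d\tau,
\]
and similarly for $(-\Delta)^{s/2}u$, with strong $L^2$-convergence for $u$ in the respective form domains.

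Second, writing $L(\tau+L)^{-1}=I-\tau(\tau+L)^{-1}$ for $L=-\Delta,-\Delta_A$, the $I$-terms cancel upon subtraction and I obtain
\[
(-\Delta_A)^{s/2}u-(-\Delta)^{s/2}u = c(s)\int_0^\infty \tau^{s/2}\bigl[(\tau-\Delta)^{-1}-(\tau-\Delta_A)^{-1}\bigr]u\,d\tau.
\]
Since $-\Delta_A=-\Delta+V_1$, the second resolvent identity
\[
(\tau-\Delta)^{-1}-(\tau-\Delta_A)^{-1} = (\tau-\Delta_A)^{-1}V_1(\tau-\Delta)^{-1} = (\tau-\Delta)^{-1}V_1(\tau-\Delta_A)^{-1}
\]
delivers both asserted forms of (2.1) simultaneously.

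The algebraic content is easy; the main obstacle is justifying absolute convergence of the resulting integral in $L^2$ for $u\in D(-\Delta_A)$, which is what forces the range $0<s<2$. I would split at some $\tau_0>0$. Near $\tau=0$, the factored expression $(\tau-\Delta_A)^{-1}V_1(\tau-\Delta)^{-1}u$ is equal, by the same resolvent identity, to $(\tau-\Delta)^{-1}u-(\tau-\Delta_A)^{-1}u$, bounded by $2\tau^{-1}\|u\|_{L^2}$, so the integrand is $O(\tau^{s/2-1}\|u\|_{L^2})$, integrable for $s>0$. Near $\tau=\infty$, since $D(-\Delta_A)\subset H^2$ (the first-order perturbation $V_1$ is Kato-small under \eqref{1.1}), the commutation $\nabla(\tau-\Delta)^{-1}=(\tau-\Delta)^{-1}\nabla$ together with $\|A\|_\infty<\infty$ gives
\[
\|V_1(\tau-\Delta)^{-1}u\|_{L^2} \leq C\tau^{-1}\|u\|_{H^1},
\]
and combining with $\|(\tau-\Delta_A)^{-1}\|_{L^2\to L^2}\leq\tau^{-1}$ yields an integrand bound $O(\tau^{s/2-2}\|u\|_{H^1})$, integrable for $s<2$. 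Fubini then legitimizes interchanging the limits used to set up the operator-valued subordination integral, completing the proof.
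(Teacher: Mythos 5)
Your proof is correct and follows essentially the same route as the paper: both rest on the Balakrishnan subordination formula combined with the second resolvent identity for the pair $-\Delta$, $-\Delta_A$, so the algebraic core is identical (you subtract the two subordination integrals symmetrically, while the paper inserts $(\tau-\Delta)^{-1}$ into the Balakrishnan formula for $(-\Delta_A)^{s/2}$ and cancels terms, but these are cosmetic variants). The one genuine addition on your side is the explicit small-$\tau$/large-$\tau$ convergence argument identifying $0<s<2$ as the admissible range, which the paper states implicitly but does not verify.
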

	\begin{proof}
		Let us recall the formula	
		\begin{equation*}
			(-\Delta_{A})^{\frac{s}{2}}=c(s)(-\Delta_{A})\int^{\infty}_{0}\tau^{\frac{s}{2}-1}(\tau-\Delta_{A})^{-1}d\tau,
		\end{equation*}
		where $(c(s))^{-1}=\int^{\infty}_{0}\tau^{\frac{s}{2}-1}(\tau+1)^{-1}d\tau$ with $0<s<2$ (also can see \cite{I}). Applying the integral representation above, we deduce that
		\begin{flalign*}
			&(-\Delta_{A})^{\frac{s}{2}}u=c(s)(-\Delta_{A})\int^{\infty}_{0}\tau^{\frac{s}{2}-1}(\tau-\Delta_{A})^{-1}ud\tau\\
			=&c(s)(-\Delta_{A})\int^{\infty}_{0}\tau^{\frac{s}{2}-1}((\tau-\Delta_{A})^{-1}-(\tau-\Delta)^{-1})ud\tau\\
			&+c(s)(-\Delta_{A})\int^{\infty}_{0}\tau^{\frac{s}{2}-1}(\tau-\Delta)^{-1}ud\tau\\
		=&-c(s)\int^{\infty}_{0}\tau^{\frac{s}{2}-1}(\arrowvert A \arrowvert^{2}-iA\cdot\nabla-i\nabla\cdot A)(\tau-\Delta)^{-1}ud\tau\\
			&+c(s)\int^{\infty}_{0}\tau^{\frac{s}{2}}(\tau-\Delta_{A})^{-1}(\arrowvert A \arrowvert^{2}-iA\cdot\nabla-i\nabla\cdot A)(\tau-\Delta)^{-1}ud\tau\\
			&+c(s)(-\Delta)\int^{\infty}_{0}\tau^{\frac{s}{2}-1}(\tau-\Delta)^{-1}ud\tau\\
			&+c(s)\int^{\infty}_{0}\tau^{\frac{s}{2}-1}(\arrowvert A \arrowvert^{2}-iA\cdot\nabla-i\nabla\cdot A)(\tau-\Delta)^{-1}ud\tau\\
			=&(-\Delta)^{\frac{s}{2}}u+c(s)\int^{\infty}_{0}\tau^{\frac{s}{2}}(\tau-\Delta_{A})^{-1}(\arrowvert A \arrowvert^{2}-iA\cdot\nabla-i\nabla\cdot A)(\tau-\Delta)^{-1}ud\tau.
		\end{flalign*}
		That is,
		\begin{flalign}
			&(-\Delta_{A})^{\frac{s}{2}}u-(-\Delta)^{\frac{s}{2}}u\nonumber\\
			=&c(s)\int^{\infty}_{0}\tau^{\frac{s}{2}}(\tau-\Delta_{A})^{-1}(\arrowvert A \arrowvert^{2}-iA\cdot\nabla-i\nabla\cdot A)(\tau-\Delta)^{-1}ud\tau\nonumber\\
			=&c(s)\int^{\infty}_{0}\tau^{\frac{s}{2}}(\tau-\Delta_{A})^{-1}V_{1}(x)(\tau-\Delta)^{-1}ud\tau.\label{2.2}
		\end{flalign}
		
		Because $\Delta_{A}$ and $\Delta$ have the same position in the expression. Using the formula
		\begin{equation*}
			(-\Delta)^{\frac{s}{2}}=c(s)(-\Delta)\int^{\infty}_{0}\tau^{\frac{s}{2}-1}(\tau-\Delta)^{-1}d\tau
		\end{equation*}
		and the similar method above, we obtain
		\begin{flalign}
			&(-\Delta_{A})^{\frac{s}{2}}u-(-\Delta)^{\frac{s}{2}}u\nonumber\\
			=&c(s)\int^{\infty}_{0}\tau^{\frac{s}{2}}(\tau-\Delta)^{-1}V_{1}(x)(\tau-\Delta_{A})^{-1}ud\tau.\label{2.3}
		\end{flalign}
		
		Combining \eqref{2.2} and \eqref{2.3}, \eqref{2.1} is proved. Hence the proof of Lemma \ref{lemma2.1} is completed.
	\end{proof}
	
	\begin{definition}\label{definition2.1}
		An operator $V_{x}$ is said to be a short range perturbation of $\mathscr{H}_{0}$ if $V_{x}$ maps the unit ball in $H^{s,-\sigma}$ into a precompact subset of $L^{2,\sigma}$ with some $\sigma>\frac{1}{2}$.
	\end{definition}
	
	\begin{remark}\label{remark2.1}
		Let $R_{0}(\tau)=(\tau-\Delta)^{-1}$. Then we have the weighted resolvent estimates for $\tau>0$, $1\leqslant r\leqslant2$ and $N\geqslant0$
\begin{flalign*}
&\|R_{0}(\tau)u\|_{L^{r}}\leqslant C\tau^{-1}\|u\|_{L^{r}},\\
&\|\langle x\rangle ^{-N}R_{0}(\tau)u\|_{L^{r}}\leqslant C\tau^{-1+\frac{N}{2}}\|u\|_{L^{r}},\\
&\|R_{0}(\tau)\langle x\rangle ^{-N}u\|_{L^{r}}\leqslant C\tau^{-1+\frac{N}{2}}\|u\|_{L^{r}},\\
&\|\langle x\rangle ^{N_{1}}R_{0}(\tau)\langle x\rangle ^{-N}u\|_{L^{r}}\leqslant C\tau^{-1+\frac{N-N_{1}}{2}}\|u\|_{L^{r}},
\end{flalign*}
where $N_{1}\leqslant N$.
\end{remark}

	Applying the integral representation of $V_{x}$, Lemma \ref{lemma2.1} and Remark \ref{remark2.1} we obtain the boundedness and compactness of $V_{x}$ as follows.
	
	\begin{lemma}\label{lemma2.2}
		Let $A(x)$ satisfy hypothesis \eqref{1.1}, then $V_{x}$ is a short range perturbation of $\mathscr{H}_{0}$.
	\end{lemma}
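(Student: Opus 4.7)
The plan is to work from the integral representation given in Lemma \ref{lemma2.1},
\[
V_{x}u = c(s)\int_{0}^{\infty}\tau^{\frac{s}{2}}(\tau-\Delta_{A})^{-1}V_{1}(x)(\tau-\Delta)^{-1}u\,d\tau,
\]
and to show that the integrand, viewed as an operator from $H^{s,-\sigma}$ into $L^{2,\sigma}$, admits an operator-norm majorant that is integrable in $\tau$. Since $\beta>n+2$ by hypothesis \eqref{1.1}, I can fix $\sigma>\tfrac{1}{2}$ with $\beta-2\sigma$ still large, leaving ample room to distribute the decay of $A$ symmetrically around $V_{1}$.

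The first step is to split the decay carried by $V_{1}$ as $\langle x\rangle^{-\beta}=\langle x\rangle^{-\sigma}\cdot\langle x\rangle^{-(\beta-2\sigma)}\cdot\langle x\rangle^{-\sigma}$, absorbing one power of $\langle x\rangle^{-\sigma}$ into the resolvent on each side while leaving a bounded, rapidly decaying multiplication factor in the middle. Applying the weighted estimates of Remark \ref{remark2.1} (together with an analogous statement for $R_{A}(\tau)$, to be derived separately from $\sigma(-\Delta_{A})\subset[0,\infty)$ and a resolvent identity perturbation argument) then gives each outer resolvent a $\tau$-decay factor. The gradient in $V_{1}=-iA\cdot\nabla-i\nabla\cdot A+|A|^{2}$ is handled by pairing $\nabla$ with $R_{0}(\tau)$, at the cost of only $\tau^{-1/2}$. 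I would split the $\tau$-integral into the low regime $\tau<1$ and the high regime $\tau>1$, using slightly different weight distributions in each: in the high regime the decay $\tau^{-2}$ from two resolvents combined with the $\tau^{s/2}$ factor yields integrability precisely because $s<2$, while in the low regime the weighted estimates with heavier weights placed on the $V_{1}$ side remove the singularity at $\tau=0$.

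With boundedness secured, the passage to compactness is then the standard one. For each fixed $\tau>0$, the operator $(\tau-\Delta_{A})^{-1}V_{1}(\tau-\Delta)^{-1}$ is compact from $H^{s,-\sigma}$ to $L^{2,\sigma}$, because the rapidly decaying coefficients of $V_{1}$ factor this operator through a compact embedding of weighted Sobolev spaces via Rellich--Kondrachov, with the exterior tails made uniformly small by the $\langle x\rangle^{-\beta}$ decay. Since the $\tau$-integral converges in operator norm by the previous step, the limit $V_{x}$ is itself compact.

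The principal obstacle I anticipate is establishing the weighted resolvent estimates for the magnetic resolvent $R_{A}(\tau)$ that mirror Remark \ref{remark2.1}; the natural perturbation argument based on $R_{A}(\tau)=R_{0}(\tau)-R_{A}(\tau)V_{1}R_{0}(\tau)$ is in principle standard but requires some care because $V_{1}$ is itself a first-order differential operator rather than a bounded multiplier. A secondary challenge is bookkeeping the weights at the endpoint $\tau\to 0^{+}$, where both resolvents degenerate; this is precisely where the assumption $\beta>n+2$, rather than merely $\beta>1$, becomes indispensable for absorbing the accumulated losses.
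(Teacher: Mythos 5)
Your proposal follows the same backbone as the paper's proof — the integral representation of Lemma \ref{lemma2.1}, the weighted resolvent estimates of Remark \ref{remark2.1} together with the resolvent identity $R_{A}=R_{0}-R_{A}V_{1}R_{0}$ for the magnetic resolvent, the symmetric distribution of the $\langle x\rangle^{-\beta}$ decay from $V_{1}$ onto the two outer resolvents, pairing $\nabla$ with $R_{0}(\tau)$ at cost $\tau^{-1/2}$, and the split $\int_0^\infty=\int_0^1+\int_1^\infty$ — so on boundedness the two arguments are in substance the same. The paper does a bit more: it proves boundedness of $\langle x\rangle^{\delta}V_{x}:H^{s,-\sigma}\to H^{\alpha,-\sigma}$ with a small \emph{gain of regularity} $0<\alpha<\min\{1,s\}$, not merely $V_{x}:H^{s,-\sigma}\to L^{2,\sigma}$; this extra $\alpha$ is precisely what lets it dispose of compactness in one line.

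Where you genuinely diverge is the compactness step. The paper writes $V_{x}=\langle x\rangle^{-\delta}\cdot\langle x\rangle^{\delta}V_{x}$ and then invokes compactness of the multiplication operator $\langle x\rangle^{-\delta}:H^{\alpha,-\sigma}\to L^{2,\sigma}$ (Agmon short-range class), so compactness is inherited immediately once the $\alpha$-gain is in hand. You instead propose showing that each fixed-$\tau$ integrand $(\tau-\Delta_{A})^{-1}V_{1}(\tau-\Delta)^{-1}$ is compact (via Rellich--Kondrachov and the decaying coefficients of $V_{1}$) and that the $\tau$-integral converges in operator norm, so that the limit $V_{x}$ is compact as an operator-norm limit of compact operators. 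This is a perfectly legitimate alternative, and it avoids the need to track any fractional regularity gain. The trade-off is that you must establish \emph{operator-norm} integrability in $\tau$ rather than just a bound on the integral applied to a fixed $u$; your boundedness step as stated does yield exactly that, so the argument closes. Note also that your quantitative claim that $\beta>n+2$ ``becomes indispensable'' is slightly stronger than what the paper actually uses at this point — the bookkeeping in the paper's Steps 1--4 imposes a two-sided window $2+\delta-\tfrac n2\leqslant\beta<2\delta+2+2\alpha-n$ together with $\delta>1$ and $0<\alpha<\min\{1,s\}$, which is what $\beta>n+2$ guarantees can be satisfied, but a somewhat weaker decay would already suffice here; the full strength $\beta>n+2$ is reserved for other parts of the paper.
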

	\begin{proof}
		First, we prove
		$$\langle x\rangle^{\delta}V_{x}: H^{s,-\sigma}\rightarrow H^{\alpha,-\sigma},$$
		is a bounded operator for $\delta>1$, $\sigma>\frac{1}{2}$ and $0<\alpha<\min\{1,s\}$.
		
		From the definition of space norm
		\begin{equation}\label{2.4}
			\|\langle x\rangle^{\delta}V_{x}u\|^{2}_{H^{\alpha,-\sigma}(\mathbb{R}^{n})}\leqslant\|\langle x\rangle^{\delta-\sigma}V_{x}u\|^{2}_{L^{2}}+\|\langle x\rangle^{\delta-\sigma}(-\Delta)^{\frac{\alpha}{2}}V_{x}u\|^{2}_{L^{2}},
		\end{equation}	
		we split proof into two parts. Let's recall \eqref{2.1} for $0<s<2$, we have
		\begin{flalign}
			V_{x}u=&((-\Delta_{A})^{\frac{s}{2}}-(-\Delta)^{\frac{s}{2}})u\nonumber\\
			=&c(s)\int^{\infty}_{0}\tau^{\frac{s}{2}}(\tau-\Delta_{A})^{-1}V_{1}(x)(\tau-\Delta)^{-1}ud\tau\nonumber\\
			=&c(s)\int^{\infty}_{0}\tau^{\frac{s}{2}}(\tau-\Delta)^{-1}V_{1}(x)(\tau-\Delta_{A})^{-1}ud\tau,\label{2.5}
		\end{flalign}
		where $V_{1}(x):=-iA\cdot\nabla-i\nabla\cdot A+\arrowvert A\arrowvert^{2}$ and  $(c(s))^{-1}=\int^{\infty}_{0}\tau^{\frac{s}{2}-1}(\tau+1)^{-1}d\tau$ for any $0<s<2$.
		
		Define $\tilde{A}=A+divA-|A|^{2}$, for the first term on the right of \eqref{2.4}, we estimate that
		\begin{flalign}
			&\|\langle x\rangle^{\delta-\sigma}V_{x}u\|_{L^{2}(\mathbb{R}^{n})}\nonumber\\
			=&\|\langle x\rangle^{\delta-\sigma}c(s)\int^{\infty}_{0}\tau^{\frac{s}{2}}(\tau-\Delta)^{-1}V_{1}(x)(\tau-\Delta_{A})^{-1}ud\tau\|_{L^{2}(\mathbb{R}^{n})}\nonumber\\
			\leqslant&C\int^{\infty}_{0}\tau^{\frac{s}{2}}\|\langle x\rangle^{\delta-\sigma}(\tau-\Delta)^{-1}V_{1}(x)(\tau-\Delta_{A})^{-1}u\|_{L^{2}(\mathbb{R}^{n})}d\tau\nonumber\\
			\leqslant&C\int^{\infty}_{0}\tau^{\frac{s}{2}}\|\langle x\rangle^{\delta-\sigma}(\tau-\Delta)^{-1}\tilde{A}(\tau-\Delta_{A})^{-1}u\|_{L^{2}(\mathbb{R}^{n})}d\tau\nonumber\\
			+&C\int^{\infty}_{0}\tau^{\frac{s}{2}}\|\langle x\rangle^{\delta-\sigma}(\tau-\Delta)^{-1}A\cdot\nabla(\tau-\Delta_{A})^{-1}u\|_{L^{2}(\mathbb{R}^{n})}d\tau.\label{2.6}
		\end{flalign}	
Because the integral of $\tau$, we divide it into two parts, i.e. $\int_{0}^{\infty}=\int_{0}^{1}+\int_{1}^{\infty}$.\\
$Step\ 1$. When $\tau\in(0,1)$, for the first term on the right of \eqref{2.6}, we have
		\begin{flalign}
			&\|\langle x\rangle^{\delta-\sigma}(\tau-\Delta)^{-1}\tilde{A}(\tau-\Delta_{A})^{-1}u\|_{L^{2}}\nonumber\\
			\leqslant&\|\langle x\rangle^{\delta-\sigma}(\tau-\Delta)^{-1}\langle x\rangle^{-\frac{\beta}{2}}\|_{L^{2}\rightarrow L^{2}}\cdot\|\langle x\rangle^{-\frac{\beta}{2}}(\tau-\Delta_{A})^{-1}u\|_{L^{2}}\nonumber\\
			\leqslant&C\tau^{-1+\frac{\frac{\beta}{2}-\delta+\sigma}{2}}\|\langle x\rangle^{-\frac{\beta}{2}}(R_{0}-R_{A}V_{1}(x)R_{0})u\|_{L^{2}}\nonumber\\
			\leqslant&C\tau^{-1+\frac{\frac{\beta}{2}-\delta+\sigma}{2}}(\|\langle x\rangle^{-\frac{\beta}{2}}R_{0}u\|_{L^{2}}+\|\langle x\rangle^{-\frac{\beta}{2}}R_{A}V_{1}(x)R_{0}u\|_{L^{2}})\nonumber\\
			\leqslant&C\tau^{-1+\frac{\frac{\beta}{2}-\delta+\sigma}{2}}(\|\langle x\rangle^{-\frac{\beta}{2}}R_{0}\langle x\rangle^{\sigma}\cdot \langle x\rangle^{-\sigma}u\|_{L^{2}}+\|\langle x\rangle^{-\frac{\beta}{2}}\nabla R_{0}\langle x\rangle^{\sigma}\cdot \langle x\rangle^{-\sigma}u\|_{L^{2}})\nonumber\\
			\leqslant&C\tau^{-1+\frac{\frac{\beta}{2}-\delta+\sigma}{2}}(\tau^{-1+\frac{\frac{\beta}{2}-\sigma}{2}}\|u\|_{L^{2,-\sigma}}+\|\langle x\rangle^{-\frac{\beta}{2}} R_{0}(-\Delta)^{\frac{\alpha}{2}}\langle x\rangle^{\sigma}\cdot \langle x\rangle^{-\sigma}u\|_{L^{2}})\nonumber\\
			\leqslant&C\tau^{-1+\frac{\frac{\beta}{2}-\delta+\sigma}{2}}(\tau^{-1+\frac{\frac{\beta}{2}-\sigma}{2}}\|u\|_{L^{2,-\sigma}}+\tau^{-1+\frac{\frac{\beta}{2}-\sigma}{2}}\|u\|_{H^{\alpha,-\sigma}})\nonumber\\
			\leqslant&C\tau^{-2+\frac{\beta-\delta}{2}}\|u\|_{H^{\alpha,-\sigma}}.\label{2.7}
		\end{flalign}
For the second term on the right of \eqref{2.6}, we have
		\begin{flalign}
			&\|\langle x\rangle^{\delta-\sigma}(\tau-\Delta)^{-1}A\cdot\nabla(\tau-\Delta_{A})^{-1}u\|_{L^{2}}\nonumber\\
			\leqslant&\|\langle x\rangle^{\delta-\sigma}(\tau-\Delta)^{-1}\langle x\rangle^{-\frac{\beta}{2}}\|_{L^{2}\rightarrow L^{2}}\cdot\|\langle x\rangle^{-\frac{\beta}{2}}\nabla(\tau-\Delta_{A})^{-1}u\|_{L^{2}}\nonumber\\
			\leqslant&C\tau^{-1+\frac{\frac{\beta}{2}-\delta+\sigma}{2}}\|\langle x\rangle^{-\frac{\beta}{2}}\nabla(R_{0}-R_{A}V_{1}(x)R_{0})u\|_{L^{2}}\nonumber\\
			\leqslant&C\tau^{-1+\frac{\frac{\beta}{2}-\delta+\sigma}{2}}(\|\langle x\rangle^{-\frac{\beta}{2}}\nabla R_{0}u\|_{L^{2}}+\|\langle x\rangle^{-\frac{\beta}{2}}\nabla R_{A}V_{1}(x)R_{0}u\|_{L^{2}})\nonumber\\
			\leqslant&C\tau^{-1+\frac{\frac{\beta}{2}-\delta+\sigma}{2}}(\|\langle x\rangle^{-\frac{\beta}{2}}\nabla R_{0}u\|_{L^{2}}+\|\langle x\rangle^{-\frac{\beta}{2}}R_{0}u\|_{L^{2}})\nonumber\\
			\leqslant&C\tau^{-1+\frac{\frac{\beta}{2}-\delta+\sigma}{2}}(\tau^{-1+\frac{\frac{\beta}{2}-\sigma}{2}}\|u\|_{H^{\alpha,-\sigma}}+\tau^{-1+\frac{\frac{\beta}{2}-\sigma}{2}}\|u\|_{L^{2,-\sigma}})\nonumber\\
			\leqslant&C\tau^{-2+\frac{\beta-\delta}{2}}\|u\|_{H^{\alpha,-\sigma}}.\label{2.8}
		\end{flalign}
Substitute \eqref{2.7} and \eqref{2.8} into \eqref{2.6}, we obtain that
		\begin{flalign}
			&\int^{1}_{0}\tau^{\frac{s}{2}}\|\langle x\rangle^{\delta-\sigma}(\tau-\Delta)^{-1}V_{1}(x)(\tau-\Delta_{A})^{-1}u\|_{L^{2}(\mathbb{R}^{n})}d\tau\nonumber\\
			\leqslant&C\int^{1}_{0}\tau^{\frac{s}{2}-2+\frac{\beta-\delta}{2}}d\tau\cdot\|u\|_{H^{\alpha,-\sigma}}\nonumber\\
			\leqslant&C\|u\|_{H^{s,-\sigma}},\label{2.9}
		\end{flalign}
		where the last line of \eqref{2.9} is valid as long as the conditions are met, which is $\beta\geqslant2+\delta-\frac{n}{2}$, $0<\alpha<\min\{1,s\}$.
		
		$Step\ 2$. When $\tau>1$, for the first term on the right of \eqref{2.6}, we have
		\begin{flalign}
			&\|\langle x\rangle^{\delta-\sigma}(\tau-\Delta)^{-1}\tilde{A}(\tau-\Delta_{A})^{-1}u\|_{L^{2}}\nonumber\\
			\leqslant&\|\langle x\rangle^{\delta-\sigma}(\tau-\Delta)^{-1}\langle x\rangle^{-\frac{\beta}{4}}\|_{L^{2}\rightarrow L^{2}}\cdot\|\langle x\rangle^{-\frac{\beta}{2}}\|_{L^{2}\rightarrow L^{2}}\cdot\|\langle x\rangle^{-\frac{\beta}{4}}(\tau-\Delta_{A})^{-1}u\|_{L^{2}}\nonumber\\
			\leqslant&C\tau^{-1+\frac{\frac{\beta}{4}-\delta+\sigma}{2}}\|\langle x\rangle^{-\frac{\beta}{4}}(R_{0}-R_{A}V_{1}(x)R_{0})u\|_{L^{2}}\nonumber\\
			\leqslant&C\tau^{-1+\frac{\frac{\beta}{4}-\delta+\sigma}{2}}(\|\langle x\rangle^{-\frac{\beta}{4}}R_{0}u\|_{L^{2}}+\|\langle x\rangle^{-\frac{\beta}{4}}R_{A}V_{1}(x)R_{0}u\|_{L^{2}})\nonumber\\
			\leqslant&C\tau^{-1+\frac{\frac{\beta}{4}-\delta+\sigma}{2}}(\|\langle x\rangle^{-\frac{\beta}{4}}R_{0}\langle x\rangle^{\sigma}\cdot \langle x\rangle^{-\sigma}u\|_{L^{2}}+\tau^{-\frac{1}{2}}\|\langle x\rangle^{-\frac{\beta}{4}}\nabla R_{0}u\|_{L^{2}})\nonumber\\
			\leqslant&C\tau^{-1+\frac{\frac{\beta}{4}-\delta+\sigma}{2}}(\tau^{-1+\frac{\frac{\beta}{4}-\sigma}{2}}\|u\|_{L^{2,-\sigma}}\nonumber\\
			&+\tau^{-\frac{1}{2}}\|(-\Delta)^{\frac{\alpha}{2}}\langle x\rangle^{-\frac{\beta}{4}}(\Delta)^{\frac{1-\alpha}{2}} R_{0}(-\Delta)^{\frac{\alpha}{2}}\langle x\rangle^{\sigma}\cdot \langle x\rangle^{-\sigma}u\|_{L^{2}})\nonumber\\
			\leqslant&C\tau^{-1+\frac{\frac{\beta}{4}-\delta+\sigma}{2}}(\tau^{-1+\frac{\frac{\beta}{4}-\sigma}{2}}\|u\|_{L^{2,-\sigma}}+\tau^{-\frac{1}{2}+(-1+\frac{\frac{\beta}{4}-\sigma+1}{2})}\|u\|_{H^{\alpha,-\sigma}})\nonumber\\
			\leqslant&C\tau^{-2+\frac{\beta-2\delta}{4}}\|u\|_{H^{\alpha,-\sigma}}.\label{2.10}
		\end{flalign}
For the second term on the right of \eqref{2.6}, we have
		\begin{flalign}
			&\|\langle x\rangle^{\delta-\sigma}(\tau-\Delta)^{-1}A\cdot\nabla(\tau-\Delta_{A})^{-1}u\|_{L^{2}}\nonumber\\
			\leqslant&\|\langle x\rangle^{\delta-\sigma}(\tau-\Delta)^{-1}\langle x\rangle^{-\frac{\beta}{4}}\|_{L^{2}\rightarrow L^{2}}\cdot\|\langle x\rangle^{-\frac{\beta}{2}}\|_{L^{2}\rightarrow L^{2}}\cdot\|\langle x\rangle^{-\frac{\beta}{4}}\nabla(\tau-\Delta_{A})^{-1}u\|_{L^{2}}\nonumber\\
			\leqslant&C\tau^{-1+\frac{\frac{\beta}{4}-\delta+\sigma}{2}}\|\langle x\rangle^{-\frac{\beta}{2}}\nabla(R_{0}-R_{A}V_{1}(x)R_{0})u\|_{L^{2}}\nonumber\\
			\leqslant&C\tau^{-1+\frac{\frac{\beta}{4}-\delta+\sigma}{2}}(\|\langle x\rangle^{-\frac{\beta}{4}}\nabla R_{0}u\|_{L^{2}}+\tau^{-\frac{1}{2}}\|\langle x\rangle^{-\frac{\beta}{4}}\nabla R_{0}u\|_{L^{2}})\nonumber\\
			\leqslant&C\tau^{-2+\frac{\beta-2\delta+2-2\alpha}{4}}\|u\|_{H^{\alpha,-\sigma}}.\label{2.11}
		\end{flalign}
Substitute \eqref{2.10} and \eqref{2.11} into \eqref{2.6}, we obtain that
		\begin{flalign}
			&\int^{\infty}_{1}\tau^{\frac{s}{2}}\|\langle x\rangle^{\delta-\sigma}(\tau-\Delta)^{-1}V_{1}(x)(\tau-\Delta_{A})^{-1}u\|_{L^{2}(\mathbb{R}^{n})}d\tau\nonumber\\
			\leqslant&C\int^{1}_{0}\tau^{\frac{s}{2}-2+\frac{\beta-2\delta+2-2\alpha}{4}}d\tau\cdot\|u\|_{H^{\alpha,-\sigma}}\nonumber\\
			\leqslant&C\|u\|_{H^{s,-\sigma}},\label{2.12}
		\end{flalign}
		where the last line of \eqref{2.12} is valid as long as the conditions are met, which is $\beta<2\delta+2+2\alpha-n$, $0<\alpha<\min\{1,s\}$.
		
		Next, for the second term on the right of \eqref{2.4}, we estimate that
		\begin{flalign}
			&\|\langle x\rangle^{\delta-\sigma}(-\Delta)^{\frac{\alpha}{2}} V_{x}u\|_{L^{2}(\mathbb{R}^{n})}\nonumber\\
			\leqslant&C\int^{\infty}_{0}\tau^{\frac{s}{2}}\|\langle x\rangle^{\delta-\sigma}(-\Delta)^{\frac{\alpha}{2}}(\tau-\Delta)^{-1}V_{1}(x)(\tau-\Delta_{A})^{-1}u\|_{L^{2}(\mathbb{R}^{n})}d\tau\nonumber\\
			\leqslant&C\int^{\infty}_{0}\tau^{\frac{s}{2}}\|\langle x\rangle^{\delta-\sigma}(-\Delta)^{\frac{\alpha}{2}}(\tau-\Delta)^{-1}\tilde{A}(\tau-\Delta_{A})^{-1}u\|_{L^{2}(\mathbb{R}^{n})}d\tau\nonumber\\
			+&C\int^{\infty}_{0}\tau^{\frac{s}{2}}\|\langle x\rangle^{\delta-\sigma}(-\Delta)^{\frac{\alpha}{2}}(\tau-\Delta)^{-1}A\cdot\nabla(\tau-\Delta_{A})^{-1}u\|_{L^{2}(\mathbb{R}^{n})}d\tau.\label{2.13}
		\end{flalign}
$Step\ 3$. When $\tau\in(0,1)$, for the first term on the right of \eqref{2.13}, we have
		\begin{flalign}
			&\|\langle x\rangle^{\delta-\sigma}(-\Delta)^{\frac{\alpha}{2}}(\tau-\Delta)^{-1}\tilde{A}(\tau-\Delta_{A})^{-1}u\|_{L^{2}}\nonumber\\
			\leqslant&\|\langle x\rangle^{\delta-\sigma}(-\Delta)^{\frac{\alpha}{2}}(\tau-\Delta)^{-1}\langle x\rangle^{-\frac{\beta}{2}}\|_{L^{2}\rightarrow L^{2}}\cdot\|\langle x\rangle^{-\frac{\beta}{2}}(\tau-\Delta_{A})^{-1}u\|_{L^{2}}\nonumber\\
			\leqslant&C\tau^{-1+\frac{\frac{\beta}{2}-\delta+\sigma+\alpha}{2}}(\tau^{-1+\frac{\frac{\beta}{2}-\sigma}{2}}\|\langle x\rangle^{-\frac{\beta}{2}}(\tau-\Delta_{A})^{-1}u\|_{L^{2}}\nonumber\\
			\leqslant&C\tau^{-1+\frac{\frac{\beta}{2}-\delta+\sigma+\alpha}{2}}\cdot\tau^{-1+\frac{\frac{\beta}{2}-\sigma}{2}}\|u\|_{H^{\alpha,-\sigma}}\nonumber\\
			\leqslant&C\tau^{-2+\frac{\beta-\delta+\alpha}{2}}\|u\|_{H^{\alpha,-\sigma}}.\label{2.14}
		\end{flalign}
For the second term on the right of \eqref{2.13}, we have
		\begin{flalign}
			&\|\langle x\rangle^{\delta-\sigma}(-\Delta)^{\frac{\alpha}{2}}(\tau-\Delta)^{-1}A\cdot\nabla(\tau-\Delta_{A})^{-1}u\|_{L^{2}}\nonumber\\
			\leqslant&\|\langle x\rangle^{\delta-\sigma}(-\Delta)^{\frac{\alpha}{2}}(\tau-\Delta)^{-1}\langle x\rangle^{-\frac{\beta}{2}}\|_{L^{2}\rightarrow L^{2}}\cdot\|\langle x\rangle^{-\frac{\beta}{2}}\nabla(\tau-\Delta_{A})^{-1}u\|_{L^{2}}\nonumber\\
			\leqslant&C\tau^{-1+\frac{\frac{\beta}{2}-\delta+\sigma+\alpha}{2}}\|\langle x\rangle^{-\frac{\beta}{2}}\nabla(R_{0}-R_{A}V_{1}(x)R_{0})u\|_{L^{2}}\nonumber\\
			\leqslant&C\tau^{-1+\frac{\frac{\beta}{2}-\delta+\sigma+\alpha}{2}}\tau^{-1+\frac{\frac{\beta}{2}-\sigma}{2}}\|u\|_{H^{\alpha,-\sigma}}\nonumber\\
			\leqslant&C\tau^{-2+\frac{\beta-\delta+\alpha}{2}}\|u\|_{H^{\alpha,-\sigma}}.\label{2.15}
		\end{flalign}
Combining \eqref{2.14} and \eqref{2.15}, we obtain that
		\begin{flalign}
			&\int^{1}_{0}\tau^{\frac{s}{2}}\|\langle x\rangle^{\delta-\sigma}(-\Delta)^{\frac{\alpha}{2}}(\tau-\Delta)^{-1}V_{1}(x)(\tau-\Delta_{A})^{-1}u\|_{L^{2}(\mathbb{R}^{n})}d\tau\nonumber\\
			\leqslant&C\int^{1}_{0}\tau^{\frac{s}{2}-2+\frac{\beta-\delta+\alpha}{2}}d\tau\cdot\|u\|_{H^{\alpha,-\sigma}}\nonumber\\
			\leqslant&C\|u\|_{H^{s,-\sigma}},\label{2.16}
		\end{flalign}
		where the last line of \eqref{2.16} is valid as long as the conditions are met, which is $\beta\geqslant1+\delta-\frac{n}{2}$, $0<\alpha<\min\{1,s\}$.
		
		$Step\ 4$. When $\tau>1$, for the first term on the right of \eqref{2.13}, we have
		\begin{flalign}
			&\|\langle x\rangle^{\delta-\sigma}(-\Delta)^{\frac{\alpha}{2}}(\tau-\Delta)^{-1}\tilde{A}(\tau-\Delta_{A})^{-1}u\|_{L^{2}}\nonumber\\
			\leqslant&\|\langle x\rangle^{\delta-\sigma}(-\Delta)^{\frac{\alpha}{2}}(\tau-\Delta)^{-1}\langle x\rangle^{-\frac{\beta}{4}}\|_{L^{2}\rightarrow L^{2}}\cdot\|\langle x\rangle^{-\frac{\beta}{2}}\|_{L^{2}\rightarrow L^{2}}\cdot\|\langle x\rangle^{-\frac{\beta}{4}}(\tau-\Delta_{A})^{-1}u\|_{L^{2}}\nonumber\\
			\leqslant&C\tau^{-1+\frac{\frac{\beta}{4}-\delta+\sigma+\alpha}{2}}\|\langle x\rangle^{-\frac{\beta}{4}}(\tau-\Delta_{A})^{-1}u\|_{L^{2}}\nonumber\\
			\leqslant&C\tau^{-1+\frac{\frac{\beta}{4}-\delta+\sigma+\alpha}{2}}\cdot\tau^{-1+\frac{\frac{\beta}{4}-\sigma}{2}}\|u\|_{H^{\alpha,-\sigma}}\nonumber\\
			\leqslant&C\tau^{-2+\frac{\beta-2\delta+2\alpha}{4}}\|u\|_{H^{\alpha,-\sigma}}.\label{2.17}	
		\end{flalign}
For the second term on the right of \eqref{2.13}, we have
		\begin{flalign}
			&\|\langle x\rangle^{\delta-\sigma}(-\Delta)^{\frac{\alpha}{2}}(\tau-\Delta)^{-1}A\cdot\nabla(\tau-\Delta_{A})^{-1}u\|_{L^{2}}\nonumber\\
			\leqslant&\|\langle x\rangle^{\delta-\sigma}(-\Delta)^{\frac{\alpha}{2}}(\tau-\Delta)^{-1}\langle x\rangle^{-\frac{\beta}{4}}\|_{L^{2}\rightarrow L^{2}}\cdot\|\langle x\rangle^{-\frac{\beta}{2}}\|_{L^{2}\rightarrow L^{2}}\cdot\|\langle x\rangle^{-\frac{\beta}{4}}\nabla(\tau-\Delta_{A})^{-1}u\|_{L^{2}}\nonumber\\
			\leqslant&C\tau^{-1+\frac{\frac{\beta}{4}-\delta+\sigma+\alpha}{2}}\cdot\tau^{-1+\frac{\frac{\beta}{4}-\sigma}{2}}\|\langle x\rangle^{-\frac{\beta}{4}}\nabla R_{0}u\|_{L^{2}}\nonumber\\
			\leqslant&C\tau^{-2+\frac{\beta-2\delta+2\alpha}{4}}\|u\|_{H^{\alpha,-\sigma}}.\label{2.18}
		\end{flalign}
Combining \eqref{2.17} and \eqref{2.18}, we obtain that
		\begin{flalign}
			&\int^{\infty}_{1}\tau^{\frac{s}{2}}\|\langle x\rangle^{\delta-\sigma}(-\Delta)^{\frac{\alpha}{2}}(\tau-\Delta)^{-1}V_{1}(x)(\tau-\Delta_{A})^{-1}u\|_{L^{2}(\mathbb{R}^{n})}d\tau\nonumber\\
			\leqslant&C\int^{1}_{0}\tau^{\frac{s}{2}-2+\frac{\beta-2\delta+2\alpha}{4}}d\tau\cdot\|u\|_{H^{\alpha,-\sigma}}\nonumber\\
			\leqslant&C\|u\|_{H^{s,-\sigma}},\label{2.19}
		\end{flalign}
		where the last line of \eqref{2.19} is valid as long as the conditions are met, which is $\beta<2\delta+2\alpha-n$, $0<\alpha<\min\{1,s\}$.
		
		The estimates of $Step\ 1$-$Step\ 4$  can be obtained
		$$\|\langle x\rangle^{\delta}V_{x}u\|_{H^{\alpha,-\sigma}(\mathbb{R}^{n})}\leqslant C\|u\|_{H^{s,-\sigma}}, \ 0<\alpha<\min\{1,s\}.$$
Furthermore, when $n>3$ with $2k+1<s<2k+2$, $k=1,2,...$, split $s=s-k+k$, using the similar methods as $n\leqslant3$, \eqref{2.4} can also be estimated. In fact, the high-dimensional and low-dimensional processing methods are the same, hence we omit the proof of this part here.
		
		Therefore, the boundedness of operator $$\langle x\rangle^{\delta}V_{x}: H^{s,-\sigma}\rightarrow H^{\alpha,-\sigma},$$
		is completed for $\delta>1$, $\sigma>\frac{1}{2}$ and $0<\alpha<\min\{1,s\}$.
		
		Finally, we shall prove the compactness of operator $V_{x}$. Recall the fact that $\mathbb{A}\circ\mathbb{B}$ is a compact operator if $\mathbb{A}$ is a compact operator and $\mathbb{B}$ is a bounded operator. Write $V_{x}=\langle x\rangle^{-\delta}\cdot\langle x\rangle^{\delta}V_{x}$ with $\delta>1$, and we have proved the boundedness of operator $\langle x\rangle^{\delta}V_{x}$, next the compactness of operator $\langle x\rangle^{-\delta}: H^{\alpha,-\sigma}\rightarrow L^{2,\sigma}$ shall be dealt with. When $\delta>1$, the operator $\langle x\rangle^{-\delta}$ is a multiplication operator, and belongs to Agmon potential class (see \cite{Ag}). Then, the compactness of operator $\langle x\rangle^{-\delta}: H^{\alpha,-\sigma}\rightarrow L^{2,\sigma}$ is obvious.
		
		Combining the boundedness of $\langle x\rangle^{\delta}V_{x}: H^{s,-\sigma}\rightarrow H^{\alpha,-\sigma}$ and the compactness of $\langle x\rangle^{-\delta}: H^{\alpha,-\sigma}\rightarrow L^{2,\sigma}$, we have the compactness of $V_{x}:H^{s,-\sigma}\rightarrow L^{2,\sigma}$. Therefore, the proof of Lemma \ref{lemma2.2} is completed.
	\end{proof}
	
	\subsection{The boundary values of the resolvent}
	Let $R^{s}_{A}$ and $R_{0}^{s}$ be the resolvents of operators $(-\Delta_{A})^{\frac{s}{2}}$ and $(-\Delta)^{\frac{s}{2}}$ respectively. The following lemmas study the resolvent operator $R^{s}_{0}(\lambda\pm i0):=((-\Delta)^{\frac{s}{2}}-(\lambda\pm i0))^{-1}$.
	
	\begin{lemma}[{\cite[Theorem 14.1.7]{H}}]\label{lemma2.3}
		Given $\phi\in C_c^\infty$, and set $\phi(D-\eta)u=\mathscr{F}^{-1}(\phi(\cdot-\eta)\hat{u})$ for $\eta\in\mathbb{R}^n$ and $u\in\mathscr{S}'$. Then we obtain for some $\sigma>\frac{1}{2}$ that
		\begin{equation}\label{2.20}
			\int\|\phi(D-\eta)u\|_{L^{2,\sigma}}d\eta\leqslant C\|u\|_{L^{2,\sigma}},\quad u\in L^{2,\sigma}.
		\end{equation}
		Moreover, if $\|\phi\|_{L^2}>0$, we have
		\begin{equation}\label{2.21}
			\|u\|_{L^{2,-\sigma}}^2\leqslant C\int\|\phi(D-\eta)u\|_{L^{2,-\sigma}}^2d\eta,\quad u\in L_{\mathrm{loc}}^2\cap\mathscr{S}'.
		\end{equation}
	\end{lemma}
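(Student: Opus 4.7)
The statement is Theorem 14.1.7 in Hörmander's \emph{Analysis of Linear Partial Differential Operators}, Volume II, so my primary plan is simply to invoke that reference. What follows sketches the ideas one would use to reproduce the proof.

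Both inequalities rest on the kernel identity
\begin{equation*}
\phi(D-\eta)u(x) = \int_{\mathbb{R}^n}\check\phi(z)\,e^{iz\cdot\eta}\,u(x-z)\,dz,\qquad \check\phi:=\mathscr{F}^{-1}\phi\in\mathscr{S}(\mathbb{R}^n),
\end{equation*}
obtained by Fourier inversion together with the substitution $\xi=\zeta+\eta$. For fixed $x$, the right-hand side is (up to a factor of $(2\pi)^{n/2}$) the inverse Fourier transform in $\eta$ of the map $z\mapsto\check\phi(z)u(x-z)$. Plancherel in $\eta$, combined with the weight inequality $\langle x\rangle^\sigma\leq C\langle z\rangle^\sigma\langle x-z\rangle^\sigma$ and the fact that $\langle\cdot\rangle^\sigma\check\phi\in L^2$, immediately yields the weighted $L^2$-in-$\eta$ bound
\begin{equation*}
\int\|\phi(D-\eta)u\|_{L^{2,\sigma}}^2\,d\eta \;\leq\; C\,\|u\|_{L^{2,\sigma}}^2.
\end{equation*}

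To establish \eqref{2.20} one must upgrade this $L^2$-in-$\eta$ estimate to the $L^1$-in-$\eta$ form stated, which is the delicate point. The idea is to exploit the frequency localization: the Fourier transform of $\phi(D-\eta)u$ is supported in $\eta+\mathrm{supp}\,\phi$, so decomposing $u=\sum_\nu u_\nu$ with $\hat u_\nu$ supported in unit cubes around lattice points $\nu\in\mathbb{Z}^n$ forces the map $\eta\mapsto\|\phi(D-\eta)u_\nu\|_{L^{2,\sigma}}$ to vanish outside a compact set of bounded measure depending only on $\phi$. Cauchy--Schwarz on each such compact set converts $L^2$-in-$\eta$ to $L^1$-in-$\eta$ piecewise, and the hypothesis $\sigma>\tfrac12$ provides just enough summability to reassemble $\sum_\nu\|u_\nu\|_{L^{2,\sigma}}\lesssim\|u\|_{L^{2,\sigma}}$. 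For \eqref{2.21} one relies on the constant resolution of identity $\int|\phi(\xi-\eta)|^2\,d\eta=\|\phi\|_{L^2}^2>0$, which via Parseval gives
\begin{equation*}
u \;=\; \|\phi\|_{L^2}^{-2}\int\bar\phi(D-\eta)\,\phi(D-\eta)u\,d\eta;
\end{equation*}
pairing with an $L^{2,\sigma}$ test function and applying the weighted boundedness of $\bar\phi(D-\eta)$ (which is the dual formulation of the first estimate) closes the argument by a standard $TT^*$-type pairing.

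The main obstacle is the $L^1$-in-$\eta$ character of \eqref{2.20}: Plancherel naturally produces only the weaker $L^2$-in-$\eta$ statement, and it is the dyadic frequency decomposition together with Cauchy--Schwarz on each compact $\eta$-slab that converts it to $L^1$. The hypothesis $\sigma>\tfrac12$ enters precisely to render the ensuing summation in $\nu$ finite; any slack below this threshold would cause that series to diverge.
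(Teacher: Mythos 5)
The paper offers no proof of Lemma 2.3 --- it is stated purely as a citation of H\"ormander's Theorem 14.1.7 --- so your primary plan of invoking that reference coincides with what the paper does; it is the supplementary sketch that needs scrutiny.

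Your Plancherel computation from the convolution kernel is correct and yields the \emph{$L^2$-in-$\eta$} bound
\begin{equation*}
\int\|\phi(D-\eta)u\|_{L^{2,\sigma}}^{2}\,d\eta\leqslant C\|u\|_{L^{2,\sigma}}^{2},
\end{equation*}
but the proposed upgrade to the $L^{1}$-in-$\eta$ form \eqref{2.20} contains a gap. After the unit-cube frequency decomposition $u=\sum_\nu u_\nu$, your argument requires $\sum_\nu\|u_\nu\|_{L^{2,\sigma}}\lesssim\|u\|_{L^{2,\sigma}}$; the pieces $u_\nu$ are orthogonal only in the unweighted $L^{2}$ norm (so $\sum_\nu\|u_\nu\|_{L^{2}}^{2}=\|u\|_{L^{2}}^{2}$, an $\ell^{2}$ identity), and the hypothesis $\sigma>\tfrac12$ does not produce the needed $\ell^{1}$ summability in $L^{2,\sigma}$. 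In fact \eqref{2.20} as printed appears false: with $n=1$ and $\hat u(\xi)=(1+\xi^{2})^{-1/2}$ one has $u\in L^{2,\sigma}$ for every $\sigma$, yet $\|\phi(D-\eta)u\|_{L^{2,\sigma}}\sim\langle\eta\rangle^{-1}\|\phi\|_{H^{\sigma}}$ for large $|\eta|$, which is not $L^{1}$ in $\eta$; an approximate identity gives the same scaling obstruction. The estimate actually needed later in the paper (in the proof of Lemma \ref{lemma2.7}, where both sides of \eqref{2.31} are \emph{squared} before integrating in $\eta$) is exactly the $L^{2}$-in-$\eta$ bound you derived, so \eqref{2.20} should most plausibly be read with the norm squared inside the integral and a square on the right-hand side.

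Your resolution-of-identity argument for \eqref{2.21} is sound, but note it closes only if you apply Cauchy--Schwarz \emph{in $\eta$} after pairing, splitting into
$\bigl(\int\|\phi(D-\eta)u\|_{L^{2,-\sigma}}^{2}d\eta\bigr)^{1/2}\bigl(\int\|\phi(D-\eta)g\|_{L^{2,\sigma}}^{2}d\eta\bigr)^{1/2}$,
which again uses only the $L^{2}$-in-$\eta$ form; the $L^{1}\times L^{\infty}$ split you describe would instead give the sup-in-$\eta$ variant of the conclusion, not \eqref{2.21}.
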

	
	Now we start by a localized resolvent estimate for real valued functions.
	
	\begin{lemma}[{\cite[Theorem 14.2.2]{H}}]\label{lemma2.4}
		Let $X\subset\mathbb{R}^n$ be bounded and open, $\phi\in C_c^\infty(X)$, $C_1,C_2>0$, consider all $f(\xi)\in C^2(X;\mathbb{R})$ such that
		\begin{equation}\label{2.22}
			\arrowvert\partial_{j}f(\xi)\arrowvert\geqslant C_1\,\,\text{and}\,\,\arrowvert\partial^\alpha f(\xi)\arrowvert\leqslant C_2,\quad j=1,2,...,\ \xi\in\mathrm{supp}\,\phi,\,\,\arrowvert\alpha\arrowvert\leqslant2.
		\end{equation}
		If $u\in L^{2,\sigma}$, and $\sigma>\frac{1}{2}$, then
		\begin{equation}\label{2.23}
			\mathbb{C}^\pm\ni z\mapsto\mathscr{F}^{-1}((f(\xi)-z)^{-1}\phi\hat{u})\in L^{2,-\sigma}
		\end{equation}
		can be uniquely defined as a weak* continuous function and
		\begin{equation}\label{2.24}
			\|\mathscr{F}^{-1}((f(\xi)-z)^{-1}\phi\hat{u})\|_{L^{2,-\sigma}}\leqslant C\|u\|_{L^{2,\sigma}},\quad u\in L^{2,\sigma},~z\in\mathbb{C}^\pm,
		\end{equation}
		where $\mathbb{C}^\pm=\{z;\pm Imz\geqslant0\}$.
	\end{lemma}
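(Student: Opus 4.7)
The plan is to reduce the weighted resolvent estimate to a one-dimensional limiting absorption principle via a partition of unity and a change of variables, and then reassemble the pieces using the wave-packet decomposition provided by Lemma~\ref{lemma2.3}.

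First I would exploit the hypothesis \eqref{2.22} to build a finite partition of unity subordinate to a cover of $\mathrm{supp}\,\phi$ by small balls $B_k$ on each of which some fixed partial derivative $\partial_{j(k)}f$ is bounded below by $C_1$. Write $\phi=\sum_k\phi_k$ with $\phi_k\in C_c^\infty(B_k)$. On each $B_k$, after reordering coordinates so that $j(k)=1$, the map $\Phi_k\colon(\xi_1,\xi')\mapsto(f(\xi_1,\xi'),\xi')=:(\eta_1,\eta')$ is a $C^2$-diffeomorphism onto its image whose Jacobian and its inverse are bounded purely in terms of $C_1,C_2$. Pulling the multiplier $(f(\xi)-z)^{-1}\phi_k(\xi)\hat{u}(\xi)$ through $\Phi_k$ converts it into $(\eta_1-z)^{-1}\psi_k(\eta)$ times the pulled-back Fourier transform, where $\psi_k\in C_c^\infty$ depends only on $\phi_k$ and the Jacobian.

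Next, the core one-dimensional estimate: I would prove that for every $\chi\in C_c^\infty(\mathbb{R})$,
\begin{equation*}
\|\mathscr{F}^{-1}_{\eta_1}\bigl((\eta_1-z)^{-1}\chi(\eta_1)\widehat{v}(\eta_1)\bigr)\|_{L^{2,-\sigma}(\mathbb{R})}\leqslant C\|v\|_{L^{2,\sigma}(\mathbb{R})},\qquad z\in\mathbb{C}^\pm,
\end{equation*}
with constant independent of $z$. Using the Plemelj--Sokhotski decomposition $(\eta_1-(\lambda\pm i0))^{-1}=\mathrm{p.v.}(\eta_1-\lambda)^{-1}\mp i\pi\delta(\eta_1-\lambda)$, the $\delta$-piece is controlled by the weighted trace inequality $|\widehat{v}(\lambda)|\leqslant C\|v\|_{L^{2,\sigma}(\mathbb{R})}$ valid for $\sigma>\tfrac12$, while the principal-value piece reduces to boundedness of the Hilbert transform on weighted $L^2$. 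Tensoring with the identity in the transverse variables $\eta'$ promotes this to the required multivariable estimate on each chart.

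Finally, the global estimate follows by summing over the partition: decompose the input via \eqref{2.20} into wave packets with Fourier support in small translates of a fixed compact set so that on each packet the chart-based estimate applies, then recombine the weighted $L^{2,-\sigma}$ norms on the output side using \eqref{2.21}. The weak* continuity of $z\mapsto\mathscr{F}^{-1}((f(\xi)-z)^{-1}\phi\widehat{u})$ up to the real axis is inherited from that of $(\eta_1-z)^{-1}$ in the one-dimensional model. The main obstacle will be ensuring uniformity of all constants in $z$ as $\mathrm{Im}\,z\to 0^\pm$: the Plemelj--Sokhotski limit is exactly what forces the $\sigma>\tfrac12$ threshold (any smaller weight already fails the one-dimensional trace embedding), and the changes of variables $\Phi_k$ must be executed so that the amplitudes $\psi_k$ lie in a bounded subset of $C_c^\infty$ determined solely by $C_1,C_2$, so that the one-dimensional bound integrates cleanly across the level-set foliation of $f$ without any loss of derivative.
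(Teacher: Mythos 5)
Your proposal takes a genuinely different route from the paper. The paper's ``proof'' of Lemma~\ref{lemma2.4} is a one-line citation: it invokes \cite[Theorem~14.2.2]{H}, which gives the estimate from the Besov-type space $B$ to its dual $B^{*}$, and then restricts to the weighted $L^{2}$ scale via the embeddings $L^{2,\sigma}\subset B$ and $B^{*}\subset L^{2,-\sigma}$ valid for $\sigma>\tfrac12$. You instead sketch a self-contained reconstruction of the argument behind that theorem: partition of unity, local straightening, a one-dimensional limiting absorption principle via Plemelj--Sokhotski (trace estimate for the $\delta$-piece, Hilbert transform for the principal value), and reassembly. That high-level plan is indeed what H\"ormander's proof ultimately rests on, and the observation that the $\sigma>\tfrac12$ threshold is forced by the one-dimensional trace embedding is exactly the right intuition.

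However, there is a real gap at the ``tensoring with the identity in the transverse variables'' step. When you substitute $\eta=\Phi_{k}(\xi)$ in the Fourier multiplier, the physical-space kernel becomes
\[
K_{z}(x,y)=(2\pi)^{-n}\int e^{i(x-y)\cdot\Psi_{k}(\eta)}\,(\eta_{1}-z)^{-1}\psi_{k}(\eta)\,|J_{\Psi_{k}}(\eta)|\,d\eta,
\]
with $\Psi_{k}=\Phi_{k}^{-1}$, and the phase $(x-y)\cdot\Psi_{k}(\eta)$ is now genuinely nonlinear in $\eta$. The operator is no longer ``the 1D resolvent in $\eta_{1}$ tensored with the identity''; it is a Fourier integral operator of limited ($C^{2}$) regularity, and proving that it maps $L^{2,\sigma}\to L^{2,-\sigma}$ uniformly as $\mathrm{Im}\,z\to 0^{\pm}$ is precisely the hard technical content. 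H\"ormander's Besov-space framework $B$, $B^{*}$ is engineered so that the relevant oscillatory-integral bounds hold uniformly over the family \eqref{2.22}; without that machinery, your chart-by-chart argument does not close. Separately, your appeal to \eqref{2.20}--\eqref{2.21} for reassembly is slightly misplaced here: $\phi$ already has compact Fourier support, so what is needed inside the proof of Lemma~\ref{lemma2.4} is a finite spatial partition of unity over $\mathrm{supp}\,\phi$; the $\eta$-integral wave-packet decomposition of Lemma~\ref{lemma2.3} plays its role later, in the proof of Lemma~\ref{lemma2.7}, where the estimate is globalized over all of $\xi$-space.
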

	\begin{proof}
		Using Theorem 14.2.2 in \cite{H}, since $L^{2,\sigma}\subset B$ and $B^{*}\subset L^{2,-\sigma}$ for $\sigma>\frac{1}{2}$, we obtain \eqref{2.24} directly. Hence the proof of Lemma \ref{lemma2.4} is completed.
	\end{proof}
	
	\begin{lemma}\label{lemma2.5}
		Let $X\subset\mathbb{R}^n$ be bounded and open, $\phi\in C_c^\infty(X)$, and $Q_0\in C(X;\mathbb{R})$. Denote $K_1=\overline{\{\xi\in X;~\arrowvert Q_0(\xi)\arrowvert<4\epsilon\}}\cap\mathrm{supp}\,\phi$ for some $\epsilon>0$,  if $Q_0\in C^2(K_1)$ and $\nabla Q_0\neq0$ holds in $K_1$, then there exist $\delta_0$, $C>0$, and $Q\in C(X;\mathbb{R})\cap C^2(K_1)$ such that
		\begin{equation}\label{2.25}
			\|Q-Q_0\|_{C^2(K_1)}+\|Q-Q_0\|_{C(K_2)}<\delta_0,
		\end{equation}
		where $K_2=\overline{\{\xi\in X;~\arrowvert Q_0(\xi)\arrowvert>3\epsilon\}}\cap\mathrm{supp}\,\phi$, we have
		\begin{equation*}
			\|\mathscr{F}^{-1}((Q-\zeta)^{-1}\phi\hat{u})\|_{L^{2,-\sigma}}\leqslant C\|u\|_{L^{2,\sigma}},
		\end{equation*}
		\begin{equation}\label{2.26}
			u\in {L^{2,\sigma}},\, \sigma>\frac{1}{2}, ~\zeta\in\mathbb{C}^\pm,~\arrowvert\zeta\arrowvert<\delta_0.
		\end{equation}
	\end{lemma}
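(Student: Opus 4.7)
The plan is to split $\phi$ by a smooth partition of unity adapted to the level sets of $Q_0$, reducing the estimate to Lemma \ref{lemma2.4} on the part of $\mathrm{supp}\,\phi$ where $|Q_0|$ is small, and to a trivial $L^2$ Fourier-multiplier bound on the part where $|Q_0|$ is bounded away from zero. Since the open sets $\{|Q_0|<4\epsilon\}$ and $\{|Q_0|>3\epsilon\}$ cover the compact set $\mathrm{supp}\,\phi$, I can produce $\phi_1,\phi_2\in C_c^\infty(X)$ with $\phi=\phi_1+\phi_2$, $\mathrm{supp}\,\phi_1\subset\{|Q_0|<4\epsilon\}\cap\mathrm{supp}\,\phi$, and $\mathrm{supp}\,\phi_2\subset\{|Q_0|>3\epsilon\}\cap\mathrm{supp}\,\phi$, by choosing a smooth partition of unity subordinate to a finite open-ball refinement of this cover and multiplying by $\phi$.

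For the contribution of $\phi_2$, the hypothesis $\|Q-Q_0\|_{C(K_2)}<\delta_0$ together with $|Q_0|>3\epsilon$ on $\mathrm{supp}\,\phi_2$ gives $|Q|\geqslant 2\epsilon$ once $\delta_0<\epsilon$, hence $|Q-\zeta|\geqslant\epsilon$ for all $|\zeta|<\delta_0$. The symbol $m_2(\xi):=(Q(\xi)-\zeta)^{-1}\phi_2(\xi)$ is then a continuous, compactly supported function bounded in $L^\infty$ uniformly in $\zeta$, so Plancherel delivers $\|\mathscr{F}^{-1}(m_2\hat u)\|_{L^2}\leqslant C\|u\|_{L^2}$, and the elementary embeddings $L^{2,\sigma}\hookrightarrow L^2\hookrightarrow L^{2,-\sigma}$ (valid for $\sigma>0$) upgrade this to the desired $L^{2,\sigma}\to L^{2,-\sigma}$ estimate with $C$ independent of $\zeta$.

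For the contribution of $\phi_1$, I invoke Lemma \ref{lemma2.4} with $Q$ in the role of $f$ and $\phi_1$ in the role of $\phi$. Compactness of $K_1$, together with $\nabla Q_0\neq 0$ and $Q_0\in C^2(K_1)$, yields constants $2C_1,2C_2>0$ such that $|\nabla Q_0|\geqslant 2C_1$ and $|\partial^\alpha Q_0|\leqslant 2C_2$ on $K_1$ for $|\alpha|\leqslant 2$; choosing $\delta_0<\min\{\epsilon,C_1,C_2\}$ preserves $|\nabla Q|\geqslant C_1$ and $|\partial^\alpha Q|\leqslant 3C_2$ on $K_1$ under the perturbation \eqref{2.25}. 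Because Lemma \ref{lemma2.4} demands $f\in C^2(X;\mathbb{R})$, I extend by a cutoff: pick $\psi\in C_c^\infty$ with $\psi\equiv 1$ on a neighborhood of $\mathrm{supp}\,\phi_1$ and $\mathrm{supp}\,\psi$ contained in the interior of $K_1$, and set $\widetilde Q:=\psi Q\in C^2(X;\mathbb{R})$. Then $\widetilde Q=Q$ together with its first two derivatives on $\mathrm{supp}\,\phi_1$, so Lemma \ref{lemma2.4} applies to $\widetilde Q$ with cutoff $\phi_1$ at $z=\zeta\in\mathbb{C}^\pm$, giving
\begin{equation*}
\|\mathscr{F}^{-1}((\widetilde Q-\zeta)^{-1}\phi_1\hat u)\|_{L^{2,-\sigma}}\leqslant C\|u\|_{L^{2,\sigma}},
\end{equation*}
and the pointwise identity $(\widetilde Q-\zeta)^{-1}\phi_1=(Q-\zeta)^{-1}\phi_1$ on $\mathrm{supp}\,\phi_1$ converts this into the required estimate; summing with the $\phi_2$-piece yields \eqref{2.26}.

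The main obstacle I anticipate is reconciling the merely local $C^2(K_1)$ regularity of $Q$ with the global $C^2(X;\mathbb{R})$ hypothesis of Lemma \ref{lemma2.4}, which the cutoff $\widetilde Q=\psi Q$ addresses; carrying this out uniformly in $Q$ forces $\delta_0$ to be small enough to simultaneously preserve the $C^2$ bounds of $Q_0$ on $K_1$ under \eqref{2.25}, maintain $|Q-\zeta|\geqslant\epsilon$ on $\mathrm{supp}\,\phi_2$ for $|\zeta|<\delta_0$, and keep the partition $\phi_1,\phi_2$ and the cutoff $\psi$ dependent only on $Q_0$, $\phi$, $\epsilon$. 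Taking $\delta_0$ as the common minimum of these constraints keeps the constant $C$ in \eqref{2.26} uniform in both the admissible perturbation $Q$ and the spectral parameter $\zeta$.
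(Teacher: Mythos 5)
Your proof is correct and follows essentially the same strategy as the paper: split $\phi=\phi_1+\phi_2$ according to whether $|Q_0|$ is small (near the characteristic set) or bounded away from zero, apply Lemma \ref{lemma2.4} to the $\phi_1$ piece after verifying the perturbed $C^2$ and nondegeneracy bounds from \eqref{2.25}, and dispose of the $\phi_2$ piece by the trivial $L^2$ bound $|Q-\zeta|\geqslant\epsilon$ together with the embeddings $L^{2,\sigma}\hookrightarrow L^2\hookrightarrow L^{2,-\sigma}$. The two places you deviate are actually small tightenings of the paper's argument: the paper builds $\phi_1=h(Q_0)\phi$, $\phi_2=(1-h(Q_0))\phi$ from a cutoff $h\in C_c^\infty(\mathbb{R})$, which is only as smooth as $Q_0$ (merely $C^2$ on $K_1$, continuous elsewhere), whereas your smooth partition of unity subordinate to the open cover $\{|Q_0|<4\epsilon\}\cup\{|Q_0|>3\epsilon\}$ of $\mathrm{supp}\,\phi$ genuinely produces $\phi_1,\phi_2\in C_c^\infty$; and you explicitly introduce the cutoff $\psi Q$ to reconcile the local $C^2(K_1)$ regularity of $Q$ with the global $C^2(X;\mathbb{R})$ hypothesis of Lemma \ref{lemma2.4}, a step the paper leaves implicit.

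One point worth noting for both your argument and the paper's: Lemma \ref{lemma2.4} as stated requires the lower bound $|\partial_j f|\geqslant C_1$ for a fixed coordinate direction $j$ on $\mathrm{supp}\,\phi$, whereas you (and the paper) only secure $|\nabla Q|\gtrsim C_1$, i.e.\ a lower bound on the full gradient. To close this, one should further split $\phi_1$ by a finite partition of unity into pieces on each of which a single coordinate derivative of $Q_0$ (hence of $Q$, after the $C^2$ perturbation) dominates; the paper records the pointwise choice $j(\xi)$ but likewise omits this finite covering step, so you are no worse off, but it is a genuine gap that a careful write-up should fill.
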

	\begin{proof}
		By the assumption, we first note that $0$ is not a critical value of $Q_0$ in $\mathrm{supp}\,\phi$. This will lead to the fact that $0$ is not a critical value of $Q-\zeta$ in $\mathrm{supp}\,\phi$ as long as $\arrowvert\zeta\arrowvert$ is small and \eqref{2.25} holds.
		
		Take $h\in C_c^\infty(\mathbb{R})$ such that $h(s)=1$ when $\arrowvert s\arrowvert<3\epsilon$ and $h(s)=0$ when $s>4\epsilon$, and denote
		\begin{equation}\label{2.27}
			\phi_1(\xi)=h(Q_0(\xi))\phi(\xi),\quad\phi_2(\xi)=(1-h(Q_0(\xi)))\phi(\xi),
		\end{equation}
		then $\phi=\phi_1+\phi_2$, and $\mathrm{supp}\,\phi_1\subset K_1$. From the property of $Q_{0}$, let's assume that
		\begin{equation*}
			\arrowvert\nabla Q_0(\xi)\arrowvert\geqslant C_1\,\,\text{and}\,\,\arrowvert\partial^\alpha Q_0(\xi)\arrowvert\leqslant C_2,\quad\xi\in K_1,~\arrowvert\alpha\arrowvert\leqslant2.
		\end{equation*}
		Then for $\forall\ \xi\in K_1$, there exists $j(\xi)\in\{1,\cdots,n\}$ such that $\arrowvert\partial_{j(\xi)}Q_0(\xi)\arrowvert\geqslant\frac{C_1}{\sqrt{n}}$, and choosing $\delta_0\leqslant\mbox{$\frac{1}{2\sqrt{n}}$}$
		\begin{equation*}
			\|Q-Q_0\|_{C^2(K_1)}<\delta_0\leqslant\mbox{$\frac{C_1}{2\sqrt{n}}$},
		\end{equation*}
		we have
		\begin{equation*}
			\arrowvert\partial_{j(\xi)}Q(\xi)\arrowvert\geqslant\mbox{$\frac{C_1}{2\sqrt{n}}$}\,\,\text{and}\,\,\arrowvert\partial^\alpha Q(\xi)\arrowvert\leqslant C_2+\mbox{$\frac{C_1}{2\sqrt{n}}$},\quad\xi\in K_1,~\arrowvert\alpha\arrowvert\leqslant2.
		\end{equation*}
		By Lemma \ref{lemma2.4}, we obtain that
		\begin{equation*}
			\|\mathscr{F}^{-1}((Q-\zeta)^{-1}\phi_1\hat{u})\|_{L^{2,-\sigma}}\leqslant C\|u\|_{L^{2,\sigma}},\quad u\in {L^{2,\sigma}},~\zeta\in\mathbb{C}^\pm.
		\end{equation*}
		
		On the other hand, when $\xi\in\mathrm{supp}\,\phi_2\subset K_2$, we have $\arrowvert Q_0(\xi)\arrowvert\geqslant3\epsilon$. Thus when $\arrowvert\zeta\arrowvert<\delta_0\leqslant\epsilon$ and
		\begin{equation*}
			\|Q-Q_0\|_{C(K_2)}<\delta_0,
		\end{equation*}
		we have $\arrowvert Q-\zeta\arrowvert\geqslant\epsilon$ in $\mathrm{supp}\,\phi_2$, and consequently
		\begin{equation*}
			\|\mathscr{F}^{-1}((Q-\zeta)^{-1}\phi_2\hat{u})\|_{L^{2,-\sigma}}\leqslant\|\mathscr{F}^{-1}((Q-\zeta)^{-1}\phi_2\hat{u})\|_{L^2}\leqslant\epsilon^{-1}\|u\|_{L^{2,\sigma}},
		\end{equation*}
		which completes the proof of Lemma \ref{lemma2.5}.
	\end{proof}
	
	\begin{lemma}\label{lemma2.6}
		Given $z_0\in\mathbb{R}\setminus\{0\}$ and $\phi\in C_c^\infty(B(1))$, where $B(1)$ is the unit ball centered on the origin. For $\eta\in\mathbb{R}^n$, we define
		\begin{equation}\label{2.28}
			Q_\eta(\xi)=\frac{\arrowvert\xi+\eta\arrowvert^s-z_0}{\langle\eta\rangle^s},\quad\xi\in\mathbb{R}^n.
		\end{equation}
		Then there exist $\delta_0,C>0$ such that
		$$\|\mathscr{F}^{-1}((Q_\eta-\zeta)^{-1}\phi\hat{u})\|_{L^{2,-\sigma}}\leqslant C\|u\|_{L^{2,\sigma}},$$
		\begin{equation}\label{2.29}
			u\in L^{2,\sigma},~\sigma>\frac{1}{2}, ~\zeta\in\mathbb{C}^\pm,~\arrowvert\zeta\arrowvert<\delta_0.
		\end{equation}
	\end{lemma}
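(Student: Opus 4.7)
The plan is to extract a uniform-in-$\eta$ estimate from Lemma~\ref{lemma2.5} by splitting the $\eta$-space $\mathbb{R}^n$ into a large-$|\eta|$ regime, on which $Q_\eta$ is automatically bounded away from zero, and a compact small-$|\eta|$ regime, which a finite open cover reduces to finitely many applications of Lemma~\ref{lemma2.5}. Before starting, I would dispose of the easy case $z_0<0$: then $Q_\eta(\xi)\geqslant|z_0|/\langle\eta\rangle^s$, and since $Q_\eta(\xi)\to 1$ uniformly on $B(1)$ as $|\eta|\to\infty$, $Q_\eta$ is uniformly bounded below on $\mathrm{supp}\,\phi$, so $(Q_\eta-\zeta)^{-1}$ is uniformly bounded for $|\zeta|$ small and the estimate follows from Plancherel together with $\|\cdot\|_{L^{2,-\sigma}}\leqslant\|\cdot\|_{L^2}\leqslant\|\cdot\|_{L^{2,\sigma}}$ (which holds for $\sigma>0$).

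Assume henceforth $z_0>0$. For the large-$|\eta|$ regime, the key point is that $|\xi+\eta|/\langle\eta\rangle\to 1$ and $z_0/\langle\eta\rangle^s\to 0$ uniformly for $\xi\in B(1)$, so there exists $R>0$ with $Q_\eta\geqslant\tfrac{1}{2}$ whenever $|\eta|\geqslant R$ and $\xi\in B(1)$; taking $|\zeta|\leqslant\tfrac{1}{4}$ then bounds $(Q_\eta-\zeta)^{-1}$ pointwise by $4$ on $\mathrm{supp}\,\phi$, and the same Plancherel argument closes this subcase. For the compact regime $|\eta|\leqslant R$, I fix $\eta_0$ and apply Lemma~\ref{lemma2.5} with $Q_0:=Q_{\eta_0}$. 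The only critical point of $\xi\mapsto|\xi+\eta_0|^s$ is $\xi=-\eta_0$, where $Q_{\eta_0}=-z_0/\langle\eta_0\rangle^s\neq 0$; choosing $\epsilon(\eta_0)>0$ with $4\epsilon\langle\eta_0\rangle^s<z_0$ forces $K_1=\overline{\{|Q_{\eta_0}|<4\epsilon\}}\cap\mathrm{supp}\,\phi$ to lie in an annulus on which $|\xi+\eta_0|$ is bounded above and below by positive constants. There $Q_{\eta_0}$ is $C^\infty$ and $|\nabla Q_{\eta_0}|=s|\xi+\eta_0|^{s-1}/\langle\eta_0\rangle^s$ is bounded below, so the hypotheses of Lemma~\ref{lemma2.5} are met and yield constants $\delta(\eta_0),C(\eta_0)>0$.

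Uniformity in $\eta$ then comes from a compactness argument: the map $(\xi,\eta)\mapsto|\xi+\eta|^s$ is jointly continuous and is jointly $C^2$ off $\xi+\eta=0$. Since $K_1^{(\eta_0)}$ stays bounded away from $\xi=-\eta_0$, the map $\eta\mapsto Q_\eta$ is continuous into $C^2(K_1^{(\eta_0)})\cap C(K_2^{(\eta_0)})$ near $\eta_0$, so there is a neighborhood $V_{\eta_0}$ inside which $Q_\eta$ lies within the perturbation range permitted by Lemma~\ref{lemma2.5}. Covering the compact set $\{|\eta|\leqslant R\}$ by finitely many such $V_{\eta_j}$ and setting $\delta_0:=\min_j\delta(\eta_j)$, $C:=\max_j C(\eta_j)$, and combining with the large-$|\eta|$ estimate, produces \eqref{2.29}. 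The main obstacle I anticipate is precisely this uniform-continuity step: one has to verify that shrinking $|\eta-\eta_0|$ really drives the $C^2(K_1^{(\eta_0)})\cap C(K_2^{(\eta_0)})$-distance between $Q_\eta$ and $Q_{\eta_0}$ below the $\delta(\eta_0)$ supplied by Lemma~\ref{lemma2.5}; once this is in place the finite-cover wrap-up is routine.
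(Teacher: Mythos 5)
Your proposal is correct, and for the small-$|\eta|$ regime it mirrors the paper's argument almost exactly: fix $\eta_0$, choose $\epsilon(\eta_0)$ of the order $z_0/\langle\eta_0\rangle^s$ so that $K_1$ avoids the singularity of $|\xi+\eta_0|^s$ at $\xi=-\eta_0$, invoke Lemma~\ref{lemma2.5} to get $\delta(\eta_0)$, then use joint continuity of $(\xi,\eta)\mapsto|\xi+\eta|^s$ away from $\xi+\eta=0$ to pick a neighborhood $V_{\eta_0}$ inside which $\|Q_\eta-Q_{\eta_0}\|_{C^2(K_1)}+\|Q_\eta-Q_{\eta_0}\|_{C(K_2)}<\delta(\eta_0)$, and finish with a finite cover of the compact ball of $\eta$'s. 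This is precisely what the paper does for its family $\mathcal{B}=\{Q_\eta;\,|\eta|\leqslant2\}$, and the ``main obstacle'' you flag is indeed the step the paper also glosses over by asserting it is ``easy to see.'' Your verification sketch is adequate.

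Where you genuinely depart from the paper is in the large-$|\eta|$ regime, and your route is simpler. The paper fixes the cut at $|\eta|>2$ independently of $z_0$, so for moderate $|\eta|$ the function $Q_\eta$ can still vanish on $B(1)$; it therefore has to apply Lemma~\ref{lemma2.5} to the whole family $\mathcal{A}=\{Q_\eta;\,|\eta|>2\}$, proving via the inequality $\bigl||\xi|^s-z_0\bigr|+s|\xi|^{s-1}+1\geqslant C_{z_0,s}(\tfrac52+|\xi|^2)^{s/2}$ that every element of the closure $\overline{\mathcal{A}}\subset C^2(\overline{B(1)})$ satisfies $|Q|+|\nabla Q|\geqslant c>0$, establishing compactness of $\overline{\mathcal{A}}$ by Arzelà--Ascoli, and then running a finite cover in the $C^2$-topology. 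You instead let the threshold $R$ depend on $z_0$: once $R$ is large enough that $(|\eta|-1)^s/\langle\eta\rangle^s$ is close to $1$ and $z_0/\langle\eta\rangle^s$ is small, one gets $Q_\eta\geqslant\tfrac12$ on $B(1)$ for all $|\eta|\geqslant R$, and then $(Q_\eta-\zeta)^{-1}$ is pointwise bounded for $|\zeta|\leqslant\tfrac14$, so the estimate is a one-line Plancherel bound needing neither Lemma~\ref{lemma2.5} nor the compactification of $\overline{\mathcal{A}}$. The trade-off is that your compact regime $\{|\eta|\leqslant R\}$ is a larger ball whose radius depends on $z_0$ (whereas the paper's is fixed at $2$), but since the finite-cover argument works on any compact set this costs nothing. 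Your separate treatment of $z_0<0$ (where $Q_\eta>0$ outright and the whole lemma reduces to a Plancherel bound) is also a legitimate simplification that the paper absorbs into the general case with a remark.
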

	
	\begin{proof}
		As in the proof of Lemma \ref{lemma2.5}, we show through the following details that when $z_0\neq0$ and $\arrowvert\zeta\arrowvert$ is small, $0$ is not a critical value of $Q-\zeta$ in $\mathrm{supp}\,\phi$.
		
		First consider the set of functions $\mathcal{A}=\{Q_\eta;~\arrowvert\eta\arrowvert>2\}$. Obviously $\mathcal{A}\subset C^\infty(\overline{B(1)})$. Let $\overline{\mathcal{A}}$ be the closure of $\mathcal{A}$ in $C^2(\overline{B(1)})$ and suppose $Q\in\overline{\mathcal{A}}$. If $Q=Q_\eta$ for some $\eta$ with $\arrowvert\eta\arrowvert\geqslant2$, we have
		\begin{equation*}
			\arrowvert\nabla Q(\xi)\arrowvert=\frac{s\arrowvert\xi+\eta\arrowvert^{s-1}}{\langle\eta\rangle^s}\geqslant C_\eta>0,\quad\xi\in\overline{B(1)}.
		\end{equation*}
		If $Q=\mathop{\lim}\limits_{j\rightarrow\infty}Q_{\eta_j}$ in $C^2(\overline{B(1)})$ for some $\eta_j\rightarrow\infty$, by the inequality
		\begin{equation*}
			\left\arrowvert\arrowvert\xi\arrowvert^s-z_0\right\arrowvert+s\arrowvert\xi\arrowvert^{s-1}+1\geqslant C_{z_0,s}\mbox{$(\frac52+\arrowvert\xi\arrowvert^2)^\frac s2$},\quad\xi\in\mathbb{R}^n\setminus\{0\},
		\end{equation*}
		we know for all $j$ that
		\begin{flalign*}
			&\frac{\left\arrowvert\arrowvert\xi+\eta_j\arrowvert^s-z_0\right\arrowvert}{\langle\eta_j\rangle^s}+\frac{s\arrowvert\xi+\eta_j\arrowvert^{s-1}}{\langle\eta_j\rangle^s}+\frac{1}{\langle\eta_j\rangle^s}\nonumber\\
			\geqslant&C_{z_0,s}\left(\frac{\frac52+\arrowvert\xi+\eta_j\arrowvert^2}{1+\arrowvert\eta_j\arrowvert^2}\right)^\frac s2\\ \geqslant&2^{-\frac s2}C_{z_0,s},\quad\xi\in\overline{B(1)},
		\end{flalign*}
		and obtain by sending $j$ to $\infty$ that
		\begin{equation*}
			\arrowvert Q(\xi)\arrowvert+\arrowvert\nabla Q(\xi)\arrowvert\geqslant C>0,\quad\xi\in\mathrm{supp}\,\phi,
		\end{equation*}	
		which also holds in the previous case. Therefore $Q$ satisfies the condition of $Q_0$ in Lemma \ref{lemma2.5}. On the other hand, one checks by the Arzel\`{a}-Ascoli theorem that $\overline{\mathcal{A}}$ is compact in $C^2(\overline{B(1)})$. Now we know \eqref{2.29} is true for $\arrowvert\eta\arrowvert>2$ if we employ a finite covering argument for $\overline{\mathcal{A}}$ with respect to the $C^2$ topology by using Lemma \ref{lemma2.5}.
		
		Next consider $\mathcal{B}=\{Q_\eta;~\arrowvert\eta\arrowvert\leqslant2\}$. If $z_0>0$, we take $\epsilon=\frac{z_0}{8\langle\eta\rangle^s}$, then $\arrowvert Q_\eta(\xi)\arrowvert<4\epsilon$ implies $\arrowvert\xi+\eta\arrowvert^s>\frac{z_0}{2}$. Therefore $Q_\eta\in C^2(K_1)$ where $K_1$ is defined in Lemma \ref{lemma2.5} with $Q_0$ replaced by $Q_\eta$, and
		\begin{equation*}
			\arrowvert\nabla Q_\eta(\xi)\arrowvert=\frac{s\arrowvert\xi+\eta\arrowvert^{s-1}}{\langle\eta\rangle^s}\geqslant C_\eta>0,\quad\xi\in\mathrm{supp}\,\phi.
		\end{equation*}
		This implies
		\begin{equation*}
			\arrowvert Q_\eta(\xi)\arrowvert+\arrowvert\nabla Q_\eta(\xi)\arrowvert\geqslant C_\eta>0,\quad\xi\in\mathrm{supp}\,\phi,
		\end{equation*}
		which is also true if $z_0>0$ for $Q_\eta$ never vanishes. Now we apply Lemma \ref{lemma2.5} with $Q_0$ replaced by $Q_\eta$. For $\delta_0$ obtained in Lemma \ref{lemma2.5}, by the boundedness of $\xi$ and $\eta$, it is easy to see that there exists $\delta>0$ such that $\arrowvert\eta'-\eta\arrowvert<\delta$ implies
		\begin{equation*}
			\|Q_{\eta'}-Q_\eta\|_{C^2(K_1)}+\|Q_{\eta'}-Q_\eta\|_{C(K_2)}<\delta_0.
		\end{equation*}
		Thus a finite covering argument for $\{\eta\in\mathbb{R}^n;~\arrowvert\eta\arrowvert\leqslant2\}$ shows that \eqref{2.29} is also true for $\mathcal{B}$. Now the proof of Lemma \ref{lemma2.6} is completed.
	\end{proof}
	
	It is clear that if $\lambda\in\mathbb{R}\setminus\{0\}$, $\lambda$ is not a critical value of $\arrowvert\xi\arrowvert^s$ which is $C^\infty$ near $\{\arrowvert\xi\arrowvert^s=\lambda\}$, thus by Lemma \ref{lemma2.4}, $R_0^{s}(\lambda\pm i0)u$ is well defined at least when $\hat{u}\in C_c^\infty$, and now we are about to prove the main result of this part.
	
	\begin{lemma}\label{lemma2.7}
		Let $K$ be a closed subset of $\mathbb{C}^+$ or $\mathbb{C}^-$ such that $0\notin K$ and $\mathrm{Re}\,K$ is bounded. Then we have
		$$\|R_0^{s}(z)u\|_{L^{2,-\sigma}}\leqslant C_K\|u\|_{L^{2,\sigma}},$$
		\begin{equation}\label{2.30} u\in\mathscr{F}^{-1}C_c^\infty,~\sigma>\frac{1}{2},~s>0,~z\in K.
		\end{equation}
		Moreover, $R_0^{s}(z)$ maps $L^{2,\sigma}$ into $H^{s,-\sigma}$.
	\end{lemma}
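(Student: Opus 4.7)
The plan is to bound $R_0^s(z)$ by splitting the Fourier-multiplier symbol $m_z(\xi)=(|\xi|^s-z)^{-1}$ into pieces that are either trivially $L^2$-bounded or amenable to Lemmas \ref{lemma2.3}--\ref{lemma2.6}. Since $K$ is closed with $0\notin K$ and $\mathrm{Re}\,K$ bounded, pick $r,M_1>0$ with $|z|\geqslant 2r$ and $|\mathrm{Re}\,z|\leqslant M_1$ for $z\in K$. Fix $\epsilon\in(0,r)$ (to be tightened later) and split $K=K_1\cup K_2$ with $K_1=K\cap\{|\mathrm{Im}\,z|>\epsilon\}$. On $K_1$ we have $|m_z(\xi)|\leqslant\epsilon^{-1}$, so $R_0^s(z)$ is $L^2\to L^2$-bounded and the required $L^{2,\sigma}\to L^{2,-\sigma}$ estimate follows from the continuous inclusions $L^{2,\sigma}\hookrightarrow L^2\hookrightarrow L^{2,-\sigma}$. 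On $K_2$, $|\mathrm{Re}\,z|\geqslant\sqrt{(2r)^2-\epsilon^2}>0$, so $\mathrm{Re}\,K_2$ is a compact subset of $\mathbb{R}\setminus\{0\}$.

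For $z\in K_2$ introduce a frequency cutoff $\chi\in C_c^\infty(B(2R))$ with $\chi\equiv 1$ on $B(R)$, where $R$ is chosen so that $|\xi|^s\geqslant 2M_1$ for $|\xi|\geqslant R$. Then $(1-\chi)(\xi)m_z(\xi)$ is uniformly bounded by $2R^{-s}$ and yields an $L^2$-bounded remainder. Denote by $T_2(z)$ the operator with symbol $\chi(\xi)m_z(\xi)$, which is compactly supported in $\xi$. Applying inequality \eqref{2.21} of Lemma \ref{lemma2.3} with $\phi\in C_c^\infty(B(1))$, $\|\phi\|_{L^2}>0$, gives
\begin{equation*}
\|T_2(z)u\|_{L^{2,-\sigma}}^2\leqslant C\int\|\phi(D-\eta)T_2(z)u\|_{L^{2,-\sigma}}^2\,d\eta,
\end{equation*}
and since the symbol of $T_2(z)$ has support in $B(2R)$ while $\phi$ sits in $B(1)$, the integrand vanishes outside $|\eta|\leqslant 2R+1$, reducing to a bounded region of integration.

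For each such $\eta$, conjugating by the modulation $v=e^{-i\eta\cdot x}u$ (which preserves the $L^{2,\pm\sigma}$-norms) and shifting $\xi\mapsto\xi+\eta$ inside $\mathscr{F}^{-1}$ recasts
\begin{equation*}
\phi(D-\eta)T_2(z)u=e^{i\eta\cdot x}\langle\eta\rangle^{-s}\mathscr{F}^{-1}\bigl(\tilde\phi(\xi)(Q_\eta(\xi)-\zeta)^{-1}\hat v(\xi)\bigr),
\end{equation*}
with $z_0\in\mathrm{Re}\,K_2$ a covering center, $\zeta=(z-z_0)/\langle\eta\rangle^s$, and $\tilde\phi(\xi)=\phi(\xi)\chi(\xi+\eta)\in C_c^\infty(B(1))$. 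By compactness of $\mathrm{Re}\,K_2\subset\mathbb{R}\setminus\{0\}$, finitely many neighborhoods $B(z_0^{(j)},\delta_0^{(j)}/2)$ produced by Lemma \ref{lemma2.6} cover it; tightening $\epsilon$ so that $\epsilon<\min_j\delta_0^{(j)}/2$ guarantees $|\zeta|\leqslant|z-z_0^{(j)}|<\delta_0^{(j)}$ uniformly (using $\langle\eta\rangle\geqslant 1$). Lemma \ref{lemma2.6} then yields $\|\phi(D-\eta)T_2(z)u\|_{L^{2,-\sigma}}\leqslant C\langle\eta\rangle^{-s}\|u\|_{L^{2,\sigma}}$, and integrating over $|\eta|\leqslant 2R+1$ closes \eqref{2.30}.

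For the $H^{s,-\sigma}$ mapping, $(I-\Delta)^{s/2}R_0^s(z)$ has symbol $(1+|\xi|^2)^{s/2}m_z(\xi)$. On $\{|\xi|\geqslant R\}$ the factor $(1+|\xi|^2)^{s/2}|\xi|^{-s}$ is uniformly bounded and so is $|\xi|^s m_z(\xi)$, giving the high-frequency part by an $L^2$-bound; on $\{|\xi|\leqslant 2R\}$ the multiplier $\chi(\xi)(1+|\xi|^2)^{s/2}$ is smooth compactly supported with Schwartz kernel, hence is bounded $L^{2,-\sigma}\to L^{2,-\sigma}$ by Young's inequality with a rapidly decaying convolution kernel, and composing with $T_2(z):L^{2,\sigma}\to L^{2,-\sigma}$ yields the claim. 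The main obstacle I expect is the covering/uniformity step in the third paragraph: matching the $z_0$-dependent $\delta_0$ of Lemma \ref{lemma2.6} to the ambient $z\in K$ uniformly. This is resolved by the finite covering of the compact set $\mathrm{Re}\,K_2$ combined with the rescaling $|\zeta|=|z-z_0|/\langle\eta\rangle^s$, which is automatically small for $z$ near a covering center.
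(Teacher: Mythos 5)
Your proposal is correct and rests on the same core ingredients as the paper: Lemma~\ref{lemma2.6} applied after the modulation/rescaling $\zeta=(z-z_0)/\langle\eta\rangle^s$, a finite covering of the real part of $K$ by neighborhoods where $\delta_0$ from Lemma~\ref{lemma2.6} works, and a trivial $L^2$ bound for $z$ well off the real axis. The route, however, diverges from the paper's in a meaningful way. You introduce a frequency cutoff $\chi$ so that the $\eta$-integration in \eqref{2.21} collapses to $|\eta|\leqslant 2R+1$ and can be handled by brute force, and then you treat the $H^{s,-\sigma}$ mapping in a separate high/low split. The paper avoids the cutoff entirely: it keeps the factor $\langle\eta\rangle^s$ in front (display \eqref{2.31}), and this weight does double duty --- combined with \eqref{2.20} it absorbs the global $\eta$-integration without any support restriction, and it simultaneously produces the $(I-\Delta)^{s/2}$ gain that gives the $H^{s,-\sigma}$ statement ``for free.'' So the paper buys a shorter, two-claims-in-one argument at the cost of invoking both halves of Lemma~\ref{lemma2.3}; your approach uses only \eqref{2.21}, at the cost of an extra decomposition and a separate argument for the Sobolev gain.

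Two small technical points that need tightening. First, your $\tilde\phi(\xi)=\phi(\xi)\chi(\xi+\eta)$ depends on $\eta$, so Lemma~\ref{lemma2.6} does not literally apply to it; the clean fix is to write $T_2(z)=R_0^s(z)\chi(D)$, set $w=\chi(D)u$ (which satisfies $\|w\|_{L^{2,\sigma}}\leqslant C\|u\|_{L^{2,\sigma}}$ since $\check\chi$ is Schwartz), and apply Lemma~\ref{lemma2.6} with the fixed $\phi$ to $\phi(D-\eta)R_0^s(z)w$, with $\hat w$ supplying the support restriction on $\eta$. Second, in the $H^{s,-\sigma}$ step the composition of the multiplier $\chi(\xi)(1+|\xi|^2)^{s/2}$ with $T_2(z)$ produces the symbol $\chi^2(1+|\xi|^2)^{s/2}m_z$, not $\chi(1+|\xi|^2)^{s/2}m_z$; replace the outer cutoff by some $\chi_1\in C_c^\infty$ with $\chi_1\equiv1$ on $\mathrm{supp}\,\chi$, or observe that the discrepancy $\chi(1-\chi)(1+|\xi|^2)^{s/2}m_z$ is supported in $R\leqslant|\xi|\leqslant 2R$ where the symbol is bounded. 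The apparent circularity in choosing $\epsilon$ is harmless, since shrinking $\epsilon$ only shrinks $K_2$ and the existing cover of $\mathrm{Re}\,K_2$ persists, but it is cleaner to cover the fixed compact set $\{\sqrt{3}\,r\leqslant|x|\leqslant M_1\}$ from the start.
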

	
	\begin{proof}
		For any $z_0\in\mathbb{R}\setminus\{0\}$, we first prove that \eqref{2.30} is true for $z\in\mathbb{C}^\pm$ with $\arrowvert z-z_0\arrowvert$ sufficiently small. Notice that $\arrowvert z-z_0\arrowvert<\epsilon$ implies $\left\arrowvert\frac{z-z_0}{\langle\eta\rangle^s}\right\arrowvert<\epsilon$ for $\eta\in\mathbb{R}^n$, thus we take $\phi\in C_c^\infty(B(1))$ with $\phi\equiv1$ in $B(\frac12)$, and apply Lemma \ref{lemma2.6} with $\zeta=\frac{z-z_0}{\langle\eta\rangle^s}$ to have
		\begin{flalign*}
			&\left\|\mathscr{F}^{-1}\left(\left(\mbox{$\frac{\arrowvert\cdot+\eta\arrowvert^s-z}{\langle\eta\rangle^s}$}\right)^{-1}\phi\hat{u}\right)\right\|_{L^{2,-\sigma}}\\
			=&\left\|\mathscr{F}^{-1}\left(\left(\mbox{$\frac{\arrowvert\cdot+\eta\arrowvert^s-z_0}{\langle\eta\rangle^s}-\frac{z-z_0}{\langle\eta\rangle^s}$}\right)^{-1}\phi\hat{u}\right)\right\|_{L^{2,-\sigma}}\\
			\leqslant&C\|u\|_{L^{2,\sigma}},\  u\in {L^{2,\sigma}},~\eta\in\mathbb{R}^n,~z\in\mathbb{C}^\pm,~\arrowvert z-z_0\arrowvert<\epsilon,
		\end{flalign*}
		for some $\epsilon>0$. Therefore, if we take $\phi_0\in C_c^\infty(B(\frac12))$ with $\|\phi_0\|_{L^2}>0$, and replace the above $\hat{u}$ with $\phi_0\hat{u}(\cdot+\eta)$ where $u\in\mathscr{F}^{-1}C_c^\infty$, we have $\phi_0\hat{u}(\cdot+\eta)\in \mathscr{F}{L^{2,\sigma}}$ and then for $\eta\in\mathbb{R}^n$ that
		\begin{flalign}
			&\langle\eta\rangle^s\|\phi_0(D-\eta)R_0^{s}(z)u\|_{L^{2,-\sigma}}\nonumber\\
			\leqslant&C\|\phi_0(D-\eta)u\|_{L^{2,\sigma}}\label{2.31}\\
			\leqslant&C\left\|(I-\Delta)^\frac s2\phi_0(D-\eta)u\right\|_{L^{2,\sigma}}\nonumber\\
			=&C\left\|(I-\Delta)^\frac s2\phi(D-\eta)\phi_0(D-\eta)u\right\|_{L^{2,\sigma}}.\nonumber
		\end{flalign}
		Squaring both sides of \eqref{2.31} and integrating on $\mathbb{R}^n$ with respect to $\eta$, \eqref{2.30} when $z\in\mathbb{C}^\pm$ and $\arrowvert z-z_0\arrowvert<\epsilon$ is then a consequence of \eqref{2.20} and \eqref{2.21}.
		
		Now we apply a finite covering to $K\cap\mathbb{R}$ in a $\mathbb{C}^+$- or $\mathbb{C}^-$-neighborhood, and obtain \eqref{2.30} when $\arrowvert\mathrm{Im}\,z\arrowvert<M$ for some $M>0$. The rest when $\arrowvert\mathrm{Im}\,z\arrowvert\geqslant M$ is a result of the trivial estimate
		\begin{equation*}
			\begin{split}
				\|R_0^{s}(z)u\|_{L^2}\leqslant&C\left\|\frac{\langle\cdot\rangle^s\hat{u}}{\arrowvert\cdot\arrowvert^s-\mathrm{Re}\,z-i\mathrm{Im}\,z}\right\|_{L^2}\\
				\leqslant&C_{\mathrm{Re}\,K,M}\|u\|_{L^2},
			\end{split}
		\end{equation*}
		and embedding $L^{2,\sigma}\hookrightarrow L^2\hookrightarrow L^{2,-\sigma}$. The proof of \eqref{2.30} is now complete. That is, the proof of Lemma \ref{lemma2.7} is completed.
	\end{proof}
	Now we have finished the preparation before construction. Let's construct the new distorted Fourier transforms.
	
	\begin{definition}\label{definition2.2}
		Let $A(x)$ satisfy hypothesis \eqref{1.1}, $s>0$. We define $\Lambda$ be the set of $\lambda\in\mathbb{R}^{+}\setminus\{0\}$ such that the equation
		\begin{equation*}
			(I+V_{x}R_{0}^{s}(\lambda\pm i0))u=0
		\end{equation*}
		has a non-trivial solution $u\in L^{2,\sigma}$ with $\sigma>\frac{1}{2}$.
	\end{definition}
	
	Let $\{E_\lambda\}$ be the spectral family of $\mathscr{H}$, and $$\tilde{\Lambda}=\Lambda\cup\{0\},\ E^d=\int_{\tilde{\Lambda}}dE_\lambda,\ E^c=\int_{\mathbb{R}\setminus\tilde{\Lambda}}dE_\lambda,$$
	where $E^{d}$ and $E^{c}$ respectively denote the point spectrum and continuous spectrum of $\mathscr{H}$, we have
	\begin{lemma}\label{lemma2.8}
		If $z\in\mathbb{C}^\pm\setminus\tilde{\Lambda}$, then $(I+V_{x}R_{0}^{s}(z))^{-1}$ exist and is continuous in $L^{2,\sigma}$. Moreover, the maps
		\begin{equation}\label{2.32}
			\mathbb{C}^\pm\setminus\tilde{\Lambda}\ni z\mapsto(I+V_{x}R_{0}^{s}(z))^{-1}u\in L^{2,\sigma}
		\end{equation}
		are continuous if $u\in L^{2,\sigma}$,  $\sigma>\frac{1}{2}$.
	\end{lemma}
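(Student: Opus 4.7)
The plan is to view $I+V_xR_0^s(z)$ as a compact perturbation of the identity on $L^{2,\sigma}$ and to invoke the analytic Fredholm alternative, complemented by a perturbation argument for the continuity. First I would combine Lemma~\ref{lemma2.7} with Lemma~\ref{lemma2.2} to show that $V_xR_0^s(z)\colon L^{2,\sigma}\to L^{2,\sigma}$ is compact for every $z\in\mathbb{C}^\pm$ with $z\neq 0$: Lemma~\ref{lemma2.7} supplies a bounded map $R_0^s(z)\colon L^{2,\sigma}\to H^{s,-\sigma}$ with bounds uniform on compact subsets of $\mathbb{C}^\pm\setminus\{0\}$, while Lemma~\ref{lemma2.2} provides the compactness of $V_x\colon H^{s,-\sigma}\to L^{2,\sigma}$, and composing the two yields the claim.

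With compactness in hand, the Fredholm alternative reduces the existence of a bounded inverse $(I+V_xR_0^s(z))^{-1}$ on $L^{2,\sigma}$ to triviality of its kernel, which I would verify case by case. For $z=\lambda\pm i0$ with $\lambda\in\mathbb{R}^+\setminus\Lambda$, triviality is precisely the defining condition in Definition~\ref{definition2.2}. For $z\in\mathbb{C}^\pm$ with $\mathrm{Im}\,z\neq 0$, or for $z<0$ real, the point $z$ belongs to the resolvent set of the self-adjoint operator $\mathscr{H}$, whose spectrum is contained in $[0,\infty)$ by applying the spectral mapping theorem to $\sigma(-\Delta_A)\subset[0,\infty)$. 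Given $u\in L^{2,\sigma}$ with $(I+V_xR_0^s(z))u=0$, I set $v=R_0^s(z)u$; since $R_0^s(z)$ is the standard $L^2$-resolvent of $\mathscr{H}_0$ at such $z$, one has $v\in D(\mathscr{H}_0)=H^s$, and the identity $(\mathscr{H}_0-z)v+V_xv=u+V_xR_0^s(z)u=0$ rewrites as $(\mathscr{H}-z)v=0$, which forces $v=0$ and therefore $u=-V_xv=0$.

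For the continuity of $z\mapsto(I+V_xR_0^s(z))^{-1}u$ in $L^{2,\sigma}$, I would use the algebraic identity
\begin{equation*}
(I+V_xR_0^s(z_n))^{-1}-(I+V_xR_0^s(z))^{-1}=(I+V_xR_0^s(z_n))^{-1}V_x\bigl(R_0^s(z)-R_0^s(z_n)\bigr)(I+V_xR_0^s(z))^{-1}.
\end{equation*}
For $z_n\to z$ inside $\mathbb{C}^\pm\setminus\tilde{\Lambda}$, the weak-$*$ continuity of the boundary resolvent provided by Lemma~\ref{lemma2.4}, together with the uniform $H^{s,-\sigma}$-bound of Lemma~\ref{lemma2.7} and the compactness of $V_x\colon H^{s,-\sigma}\to L^{2,\sigma}$ from Lemma~\ref{lemma2.2}, yields $V_x\bigl(R_0^s(z)-R_0^s(z_n)\bigr)w\to 0$ in $L^{2,\sigma}$ for each fixed $w$. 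Combined with the uniform boundedness of $(I+V_xR_0^s(z_n))^{-1}$ for $z_n$ near $z$---which follows from the openness of invertibility for compact perturbations of the identity together with the uniform bounds from Lemma~\ref{lemma2.7}---this delivers the claimed strong continuity of $z\mapsto(I+V_xR_0^s(z))^{-1}u$.

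The main obstacle I anticipate is the last step: establishing uniform boundedness of $(I+V_xR_0^s(z))^{-1}$ on a neighborhood of each $z\in\mathbb{C}^\pm\setminus\tilde{\Lambda}$ requires upgrading the merely weak-$*$ continuity of $R_0^s(z)$ to a form of operator continuity of the compact operator $V_xR_0^s(z)$, and it is here that the uniform estimates from Lemma~\ref{lemma2.7} and the compactness of $V_x$ must cooperate carefully, especially near the boundary $\mathbb{R}^+\setminus\tilde{\Lambda}$.
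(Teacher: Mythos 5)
Your argument for the existence of the inverse is essentially the paper's: compactness of $V_xR_0^s(z)$ on $L^{2,\sigma}$ via Lemma~\ref{lemma2.2} and Lemma~\ref{lemma2.7}, the Fredholm alternative, triviality of the kernel via the identity $R_0^s(z)u=R(z)(I+V_xR_0^s(z))u$ when $\mathrm{Im}\,z\neq 0$ (equivalently your computation $(\mathscr{H}-z)v=0$ with $v=R_0^s(z)u$), and the definition of $\Lambda$ for boundary values. That part is fine.

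The gap you flag in the continuity step is genuine, but your diagnosis of what is missing points in the wrong direction. You do not need to upgrade to operator-norm continuity of $z\mapsto V_xR_0^s(z)$; that would indeed fail at real $z$, where Lemma~\ref{lemma2.4} only gives weak-$*$ continuity. What the paper actually uses, and what closes the gap, is the pair of facts it cites: strong continuity of $z\mapsto V_xR_0^s(z)u$ for each fixed $u$, together with \emph{pre-compactness of the whole family} $\{V_xR_0^s(z)u:\|u\|_{L^{2,\sigma}}\leqslant 1,\ z\in K\}$ for $K$ compact (this follows because $R_0^s(z)$ is uniformly bounded $L^{2,\sigma}\to H^{s,-\sigma}$ on $K$ by Lemma~\ref{lemma2.7}, and $V_x:H^{s,-\sigma}\to L^{2,\sigma}$ is compact by Lemma~\ref{lemma2.2}). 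With these two facts one proves uniform boundedness of $(I+V_xR_0^s(z_n))^{-1}$ near $z$ by contradiction: if $\|u_n\|=1$ and $(I+V_xR_0^s(z_n))u_n\to 0$, extract a subsequence along which $V_xR_0^s(z_n)u_n\to w$, conclude $u_n\to -w$ with $\|w\|=1$, and then split
\begin{equation*}
V_xR_0^s(z_n)u_n=V_xR_0^s(z_n)(u_n+w)-V_xR_0^s(z_n)w,
\end{equation*}
where the first term vanishes by the uniform operator bound on $V_xR_0^s(z_n)$ and the second converges by strong continuity, forcing $(I+V_xR_0^s(z))w=0$ with $w\neq 0$, contradicting kernel triviality. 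Once that uniform bound is in hand, your resolvent-identity computation does deliver the strong continuity. So the route is salvageable, but it is the pre-compactness of the image of the unit ball over compact $z$-sets---not any form of norm continuity of $V_xR_0^s(z)$---that does the work.
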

	\begin{proof}
		When $\mathrm{Im}\, z\neq0$, for $u\in L^{2,\sigma}$ we have $R_0^{s}(z)u\in H^s$, and it is easy to deduce that
		\begin{equation}\label{2.33}
			R_0^{s}(z)u=R(z)(I+V_{x}R_0^{s}(z))u,
		\end{equation}
		where $I+V_{x}R_0^{s}(z)$ can be interpreted as an operator in $L^{2,\sigma}$. Thus $I+V_{x}R_0^{s}(z)$ has kernel $\{0\}$, which is also true when $z\in\mathbb{C}^\pm\setminus\tilde{\Lambda}$. By Fredholm theory, $(I+V_{x}R_0^{s}(z))^{-1}$ is bounded in $L^{2,\sigma}$ and strongly continuous in $z\in\mathbb{C}^\pm\setminus\tilde{\Lambda}$, where the proof needs the facts that  $\mathbb{C}^\pm\setminus\{0\}\ni z\mapsto V_{x}R_0^{s}(z)u\in L^{2,\sigma}$ is continuous when  $u\in L^{2,\sigma}$ and $\{V_{x}R_0^{s}(z)u;~\|u\|_{L^{2,\sigma}}\leqslant1,\, z\in K\}$  is pre-compact in $L^{2,\sigma}$ if $K$ is bounded (see Lemma \ref{lemma2.2}), which obviously holds. Therefore the proof of Lemma \ref{lemma2.8} is completed.
	\end{proof}
	
	\begin{definition}\label{definition2.3}
		If $u\in L^{2,\sigma}$ with $\sigma>\frac{1}{2}$, then the $L^{2}$ functions defined by
		\begin{equation}\label{2.34}
			F_\pm^{A}u(\xi)=\mathscr{F}((I+V_{x}R_0^{s}(\lambda\pm i0 ))^{-1}u)(\xi)
		\end{equation}
		almost everywhere in $M_\lambda=\{\xi\in\mathbb{R}^n;~\arrowvert\xi\arrowvert^s=\lambda\}$ is called the distorted Fourier transforms of $u$.
	\end{definition}
	
	\begin{lemma}\label{lemma2.9} (Intertwining property)
		
		If $u\in L^{2,\sigma}$ with $\sigma>\frac{1}{2}$, we have
		\begin{equation}\label{2.35}
			F_\pm^{A}e^{it\mathscr{H}}u=e^{it\arrowvert\xi\arrowvert^s}F_\pm^{A} u,\quad s>0,\  t\in\mathbb{R}.
		\end{equation}
	\end{lemma}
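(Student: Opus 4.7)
My plan is to deduce \eqref{2.35} from a diagonalization property of $F^{A}_{\pm}$ that rests on a simple algebraic factorization of $\mathscr{H}-z$.

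First, for $z\in\mathbb{C}^{\pm}$ and $u\in D(\mathscr{H})=D(\mathscr{H}_{0})$, I would verify directly that
$$(\mathscr{H}-z)u=(I+V_{x}R^{s}_{0}(z))(\mathscr{H}_{0}-z)u,$$
by expanding the right-hand side, using $R^{s}_{0}(z)(\mathscr{H}_{0}-z)u=u$, and recognizing $(\mathscr{H}_{0}-z)u+V_{x}u=(\mathscr{H}-z)u$. Inverting the prefactor and passing to the boundary $z\to\lambda\pm i0$, which is permitted by the continuity established in Lemma \ref{lemma2.8} provided $u$ and $\mathscr{H}u$ lie in $L^{2,\sigma}$, this becomes
$$(I+V_{x}R^{s}_{0}(\lambda\pm i0))^{-1}(\mathscr{H}-\lambda)u=(\mathscr{H}_{0}-\lambda)u.$$

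Next, I would apply the ordinary Fourier transform to both sides and restrict to $M_{\lambda}=\{|\xi|^{s}=\lambda\}$. The right-hand side becomes $(|\xi|^{s}-\lambda)\hat{u}(\xi)$, which vanishes identically on $M_{\lambda}$; the left-hand side, by Definition \ref{definition2.3}, is exactly $F^{A}_{\pm}(\mathscr{H}u)(\xi)-\lambda F^{A}_{\pm}u(\xi)$. Thus on $M_{\lambda}$ one obtains
$$F^{A}_{\pm}(\mathscr{H}u)(\xi)=|\xi|^{s}F^{A}_{\pm}u(\xi),$$
which is the diagonalization of $\mathscr{H}$ by $F^{A}_{\pm}$, valid on a dense core of $L^{2,\sigma}$.

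Finally, to promote this diagonalization to the intertwining with the unitary group, I would argue by functional calculus. The identity above, applied to $u$ replaced by $R(z)u$, produces $F^{A}_{\pm}R(z)u=(|\xi|^{s}-z)^{-1}F^{A}_{\pm}u$ for $z\in\mathbb{C}\setminus\mathbb{R}^{+}$; inserting this into Stone's formula (equivalently, invoking the Borel functional calculus with $f(\tau)=e^{it\tau}$) immediately delivers \eqref{2.35}. I expect the main obstacle to lie not in any of the algebraic manipulations above but in the extension step: since $e^{it\mathscr{H}}u$ generally leaves $L^{2,\sigma}$ even when $u\in L^{2,\sigma}$, the left-hand side of \eqref{2.35} is not literally covered by Definition \ref{definition2.3}. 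This necessitates a prior extension of $F^{A}_{\pm}$ to a bounded operator on all of $L^{2}$---ultimately tied to the partial isometry property of $W_{\pm}$ developed in Section 3---after which density of $L^{2,\sigma}\cap D(\mathscr{H})$ together with $L^{2}$-continuity of both $e^{it\mathscr{H}}$ and multiplication by $e^{it|\xi|^{s}}$ propagates the identity to every $u\in L^{2,\sigma}$.
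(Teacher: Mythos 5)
Your derivation of the pointwise diagonalization $F^{A}_{\pm}(\mathscr{H}u)=|\xi|^{s}F^{A}_{\pm}u$ through the factorization $(\mathscr{H}-z)u=(I+V_{x}R^{s}_{0}(z))(\mathscr{H}_{0}-z)u$ and restriction to $M_{\lambda}$ is essentially identical to the paper's steps, with the minor caveat that the boundary passage $z\to\lambda\pm i0$ is cleanest on $u\in\mathscr{F}^{-1}C_{c}^{\infty}(\mathbb{R}^{n}\setminus\{0\})$, where $(\mathscr{H}_{0}-\lambda)u\in\mathscr{S}$ and $R^{s}_{0}(\lambda\pm i0)(\mathscr{H}_{0}-\lambda)u=u$ holds by weak-$*$ continuity; for general $u$ with $u,\mathscr{H}u\in L^{2,\sigma}$ you would still need $\mathscr{H}_{0}u\in L^{2,\sigma}$, which is not automatic. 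Where you diverge from the paper is in the lift from the generator to the group: you go through the resolvent identity $F^{A}_{\pm}R(z)u=(|\xi|^{s}-z)^{-1}F^{A}_{\pm}u$ and Stone's formula, whereas the paper simply extends (2.36) to $u\in H^{s}$ by density and closedness of $\mathscr{H}$ (which simultaneously yields $F^{A}_{\pm}u\in\mathrm{Dom}(|\cdot|^{s})$) and then differentiates $e^{-it|\xi|^{s}}F^{A}_{\pm}e^{it\mathscr{H}}u$ in $t$, showing the derivative vanishes. Both are reasonable mechanisms in principle.

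The genuine gap is in your proposed repair of the extension problem. You correctly identify that $e^{it\mathscr{H}}u$ (and likewise $R(z)u$) need not stay in $L^{2,\sigma}$, so Definition \ref{definition2.3} does not literally apply; but the remedy you propose, namely to first obtain $L^{2}$-boundedness of $F^{A}_{\pm}$ from the wave-operator theory of Section 3, is circular. The $L^{2}$-boundedness of $F^{A}_{\pm}$ in this paper comes from the relation $F^{A}_{\pm}W_{\pm}=\mathscr{F}$ (Lemma \ref{lemma3.2}), and the proof of Lemma \ref{lemma3.2} invokes precisely the intertwining identity \eqref{2.35} you are trying to establish. Mere isometry of $W_{\pm}$ (Lemma \ref{lemma3.1}, which does not depend on Lemma \ref{lemma2.9}) is not enough to bound $F^{A}_{\pm}$ on $L^{2}$; you would need the full relation $F^{A}_{\pm}W_{\pm}=\mathscr{F}$. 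To stay non-circular you should instead follow the paper's route: establish (2.36) on the core $\mathscr{F}^{-1}C_{c}^{\infty}(\mathbb{R}^{n}\setminus\{0\})$, extend by closedness to $u\in H^{s}$, and then verify directly that $t\mapsto e^{-it|\xi|^{s}}F^{A}_{\pm}e^{it\mathscr{H}}u$ has vanishing derivative, dispensing with any a priori $L^{2}$-boundedness claim for $F^{A}_{\pm}$.
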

	
	\begin{proof}
		We first consider $u\in\mathscr{F}^{-1}C_c^\infty(\mathbb{R}^n\setminus\{0\})$. For any $\lambda\in\mathbb{R}^{+}\setminus{\tilde{\Lambda}}$, we have $(\mathscr{H}_0-\lambda)u\in\mathscr{S}$ and thus $R_0^{s}(\lambda\pm i0)(\mathscr{H}_0-\lambda)u=u$ by weak $*$ continuity of $R_0$ and dominated convergence. It follows that
		\begin{equation*}
			(\mathscr{H}-\lambda)u=(I+V_{x}R_0^{s}(\lambda\pm i0))(\mathscr{H}_0-\lambda)u,
		\end{equation*}
		which implies
		\begin{equation*}
			(F_\pm^{A}(\mathscr{H}-\lambda)u)(\xi)=(\arrowvert\xi\arrowvert^s-\lambda)\hat{u}(\xi)=0,\quad when\ \xi\in M_\lambda,
		\end{equation*}
		and this just means
		\begin{equation}\label{2.36}
			F_\pm^{A}\mathscr{H}u=\arrowvert\xi\arrowvert^sF_\pm^{A}u.
		\end{equation}
		Since $\mathscr{F}^{-1}C_c^\infty(\mathbb{R}^n\setminus\{0\})$ is dense in $H^s$ and $\mathscr{H}$ is closed, \eqref{2.36} holds when $u\in H^s$, which in the meanwhile shows that $F_\pm^{A} u\in\mathrm{Dom}(\arrowvert\cdot\arrowvert^s)$. Now we can differentiate $e^{-it\arrowvert\xi\arrowvert^s}F_\pm^{A} e^{it\mathscr{H}}u$ in $t$ when $u\in H^s$, and show that the derivative is $0$, therefore \eqref{2.35} holds for $H^s$ is dense in $L^2$. That is, the proof of Lemma \ref{lemma2.9} is completed.
	\end{proof}
	\section{The properties of wave operators}
	This section introduces the existence and asymptotic completeness of wave operators.
	\subsection{The existence of wave operators}
	This subsection is mainly to prove the existence of wave operators. We have been proved that $V_{x}$ is a short range perturbation of $\mathscr{H}_{0}$ in Lemma \ref{lemma2.2}, which is a sufficient condition to prove the existence of wave operators.
	
	We define the wave operators $W_{\pm}$ by the strong $L^{2}$ limits
	\begin{equation*}
		W_{\pm}u=\lim_{t\rightarrow\pm\infty}e^{it\mathscr{H}}e^{-it\mathscr{H}_{0}}u,\ u\in L^{2}(\mathbb{R}^{n}).
	\end{equation*}
	Using the properties of $V_{x}$ and space decomposition, we prove the existence of the wave operators as follows.
	\begin{lemma}\label{lemma3.1}
		If $A(x)$ satisfies hypothesis \eqref{1.1}, then $W_{\pm}$ exist and isometric for any $s>0$. Therefore, the following intertwining property holds:
		\begin{equation}\label{3.1}
			e^{it\mathscr{H}}W_{\pm}=W_{\pm}e^{it\mathscr{H}_{0}},\ t\in\mathbb{R}.
		\end{equation}
	\end{lemma}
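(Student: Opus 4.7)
The plan is to apply Cook's method on a dense subspace of wave packets whose Fourier transforms are supported away from $\xi=0$, reducing the existence of $W_\pm$ to an integrability-in-time question that is settled using the short-range mapping property of $V_x$ from Lemma \ref{lemma2.2} together with weighted local decay of the free propagator.

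Take the dense subspace $\mathcal{D} = \mathscr{F}^{-1}C_c^\infty(\mathbb{R}^n \setminus \{0\})$ and fix $u \in \mathcal{D}$. By Lemma \ref{lemma2.2}, $V_x$ maps $H^{s,-\sigma}$ boundedly into $L^{2,\sigma}$ for some $\sigma > \frac{1}{2}$; in particular, since $H^s \subset H^{s,-\sigma}$ and $L^{2,\sigma} \subset L^2$, $V_x$ is bounded from $H^s$ into $L^2$, hence $\mathscr{H}_0$-bounded, so that $D(\mathscr{H}_0) \subset D(\mathscr{H})$. For $u \in \mathcal{D} \subset D(\mathscr{H}_0) \subset D(\mathscr{H})$ the map $t \mapsto e^{it\mathscr{H}}e^{-it\mathscr{H}_0}u$ is strongly $C^1$ with
\begin{equation*}
\frac{d}{dt}\bigl(e^{it\mathscr{H}}e^{-it\mathscr{H}_0}u\bigr) = ie^{it\mathscr{H}}V_x e^{-it\mathscr{H}_0}u,
\end{equation*}
and unitarity of $e^{it\mathscr{H}}$ yields
\begin{equation*}
\|e^{it_2\mathscr{H}}e^{-it_2\mathscr{H}_0}u - e^{it_1\mathscr{H}}e^{-it_1\mathscr{H}_0}u\|_{L^2} \leqslant \int_{t_1}^{t_2}\|V_x e^{-it\mathscr{H}_0}u\|_{L^2}\,dt.
\end{equation*}
Thus existence of $W_\pm u$ reduces to integrability of $\|V_x e^{-it\mathscr{H}_0}u\|_{L^2}$ at $\pm\infty$.

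For the decay, take $\sigma > 1$ in the admissible range of Lemma \ref{lemma2.2}, which is possible since the hypothesis $\beta > n+2$ leaves enough flexibility in the weight parameters used in its proof. Because $(I-\Delta)^{s/2}$ commutes with $e^{-it\mathscr{H}_0}$ and preserves $\mathcal{D}$, setting $v = (I-\Delta)^{s/2}u \in \mathcal{D}$ gives
\begin{equation*}
\|V_x e^{-it\mathscr{H}_0}u\|_{L^2} \leqslant C\|e^{-it\mathscr{H}_0}u\|_{H^{s,-\sigma}} = C\|\langle x\rangle^{-\sigma}e^{-it\mathscr{H}_0}v\|_{L^2}.
\end{equation*}
The Fourier support of $v$ lies in a compact set $K \subset \mathbb{R}^n \setminus \{0\}$ on which $|\xi|^s$ is smooth with non-vanishing gradient $s|\xi|^{s-1}\xi/|\xi|$, so non-stationary phase integration by parts applied to $e^{-it\mathscr{H}_0}v(x) = (2\pi)^{-n/2}\int e^{i(x\cdot\xi - t|\xi|^s)}\hat v(\xi)\,d\xi$ produces $|e^{-it\mathscr{H}_0}v(x)| \leqslant C_N(1+|x|+|t|)^{-N}$ for every $N$ outside the wave cone $c|t| \leqslant |x| \leqslant C|t|$, while inside the cone $\langle x\rangle \sim \langle t\rangle$. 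Combining with the trivial $L^2$ bound $\|e^{-it\mathscr{H}_0}v\|_{L^2} = \|v\|_{L^2}$ yields $\|\langle x\rangle^{-\sigma}e^{-it\mathscr{H}_0}v\|_{L^2} \leqslant C\langle t\rangle^{-\sigma}$ for $|t| \geqslant 1$, which is integrable in $t$ since $\sigma > 1$.

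The Cauchy criterion yields $W_\pm u$ for $u \in \mathcal{D}$; density of $\mathcal{D}$ in $L^2$ together with the uniform bound $\|e^{it\mathscr{H}}e^{-it\mathscr{H}_0}\|_{L^2 \to L^2} = 1$ extends the strong convergence to all of $L^2$, and the strong limit of unitary operators on a Hilbert space is automatically an isometry, so $W_\pm$ is isometric. The intertwining identity \eqref{3.1} then follows from a standard time-shift argument:
\begin{equation*}
W_\pm e^{i\tau\mathscr{H}_0}u = \lim_{s \to \pm\infty}e^{i(s+\tau)\mathscr{H}}e^{-is\mathscr{H}_0}u = e^{i\tau\mathscr{H}}W_\pm u,\qquad u \in L^2,\ \tau \in \mathbb{R}.
\end{equation*}
The main obstacle in this outline is the weighted local decay estimate for $e^{-it\mathscr{H}_0}v$: keeping $\xi = 0$ outside $\mathrm{supp}\,\hat v$ is precisely what renders $|\xi|^s$ smooth with non-degenerate gradient on the support, enabling the standard non-stationary phase analysis uniformly in $t$.
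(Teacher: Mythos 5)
Your proof is correct and follows essentially the same strategy as the paper: Cook's method on $\mathscr{F}^{-1}C_c^\infty(\mathbb{R}^n\setminus\{0\})$, non-stationary phase to get rapid decay of $e^{-it\mathscr{H}_0}$ outside the wave cone, the trivial $L^2$ bound inside the cone, and the weighted short-range mapping of $V_x$. The only noteworthy variation is that you invoke the $V_x:H^{s,-\sigma}\to L^{2,\sigma}$ bound with $\sigma>1$, which is not the literal statement of Lemma \ref{lemma2.2} (it asserts ``some $\sigma>\frac12$'') but does follow from its proof since the constraint $\beta>n+2$ forces $\delta>n+1-\alpha\geqslant2$ and hence allows $\sigma>1$; the paper instead decomposes $e^{-it\mathscr{H}_0}u$ into in-cone and out-cone pieces and, for the in-cone piece, uses the auxiliary estimate $\|V_xu^2\|_{L^2}\lesssim\|\langle x\rangle^{-\delta}u^2\|_{H^s}$ with $\delta>1$, thereby avoiding the need for $\sigma>1$ explicitly.
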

	\begin{proof}
		Since $e^{it\mathscr{H}}e^{-it\mathscr{H}_{0}}$ is unitary, the existence and isometry of the limits $W_{\pm}$, and the intertwining property follow if we can show the existence in a dense subset of $L^2$. By Cook's method (see \cite{RS2} in P20), notice that $\mathscr{F}^{-1}C_c^\infty(\mathbb{R}^n\setminus\{0\})$ is a dense subset of $H^s$, it suffices to show for any $u\in\mathscr{F}^{-1}C_c^\infty(\mathbb{R}^n\setminus\{0\})$ that
		\begin{equation}\label{3.2}
			\int_{\arrowvert t\arrowvert>1}\|V_{x}e^{-it\mathscr{H}_0}u\|_{L^2}dt<\infty.
		\end{equation}
		In fact, if $u\in\mathscr{S}$, then $e^{-it\mathscr{H}_0}u$ is a $C^{\infty}$ function of $t$ with values in $\mathscr{S}$, which proves the existence of the derivative
		$$\frac{d}{dt}(e^{it\mathscr{H}}e^{-it\mathscr{H}_0}u)=e^{it\mathscr{H}}(i\mathscr{H}-i\mathscr{H}_{0})e^{-it\mathscr{H}_0}u=ie^{it\mathscr{H}}V_{x}e^{-it\mathscr{H}_0}$$
		in $L^{2}$. The limit \eqref{1.3} exist if the integral is integrable in \eqref{3.2}.
		
		First notice that there exist positive numbers $r_{1}$ and $r_{2}$ such that
		\begin{equation*}
			0<r_{1}<\left\arrowvert D(\arrowvert\xi\arrowvert^s)\right\arrowvert<r_{2},\quad \xi\in\mathrm{supp}\,\hat{u},
		\end{equation*}
		where $D_{j}=\frac{\partial}{i\partial\xi_{j}}$, $j=1,...,n$. We now estimate
		$$e^{-it\mathscr{H}_0}u=(2\pi)^{-n}\int_{\mathbb{R}^{n}} e^{i(x\cdot\xi-t\arrowvert\xi\arrowvert^s)}\hat{u}(\xi)d\xi.$$
		Since
		\begin{equation*}
			\partial_x^\alpha e^{-it\mathscr{H}_0}u(x)=(2\pi)^{-n}\int_{\mathbb{R}^{n}} e^{i(x\cdot\xi-t\arrowvert\xi\arrowvert^s)}\hat{u}(\xi)(i\xi)^\alpha d\xi,
		\end{equation*}
		and integration by part using the facts that $\xi\mapsto\frac{x\cdot\xi-t\arrowvert\xi\arrowvert^s}{\arrowvert x\arrowvert+\arrowvert t\arrowvert}$ has uniformly bounded derivatives in $\mathrm{supp}\,\hat{u}$, and has gradient uniformly bounded from below there. Hence, we obtain that
		$$\arrowvert\partial_x^\alpha e^{-it\mathscr{H}_0}u(x)\arrowvert\leqslant C_{N,\alpha}(\arrowvert x\arrowvert+\arrowvert t\arrowvert)^{-N},$$
		\begin{equation}\label{3.3}
			\alpha\in\mathbb{N}_0^n,\, N=1,2,...;\arrowvert x\arrowvert<\frac{r_{1}\arrowvert t\arrowvert}{2}~\text{or}~\arrowvert x\arrowvert>2r_{2}\arrowvert t\arrowvert.
		\end{equation}
		
		Now take $\varphi\in C_c^\infty(\mathbb{R}^{n}\backslash{0})$ such that $\varphi(x)=1$ when $\frac{r_{1}}{2}<\arrowvert x\arrowvert<2r_{2}$ and $\varphi(x)=0$ when $\arrowvert x\arrowvert<\frac{r_{1}}{4}$ or $\arrowvert x\arrowvert>4r_{2}$. Let
		\begin{equation*}
			u^1(x)=(1-\varphi(\frac{x}{t}))e^{-it\mathscr{H}_0}u(x),\quad u^2(x)=\varphi(\frac{x}{t})e^{-it\mathscr{H}_0}u(x).
		\end{equation*}
		
		For $u^1$, we have
		\begin{flalign}
			\|V_{x}u^1\|_{L^2}\leqslant&\|V_{x}u^1\|_{L^{2,\sigma}}\nonumber\\
			\leqslant&\|V_{x}\|_{H^{s,-\sigma}\rightarrow L^{2,\sigma}}\cdot\|u^1\|_{H^{s,-\sigma}}\nonumber\\
			\leqslant&\|V_{x}\|_{H^{s,-\sigma}\rightarrow L^{2,\sigma}}\cdot\|(I-\Delta)^{\frac{s}{2}}u^1\|_{L^{2,-\sigma}}\nonumber\\
			\leqslant&\|V_{x}\|_{H^{s,-\sigma}\rightarrow L^{2,\sigma}}\cdot\|(I-\Delta)^{\frac{s}{2}}u^1\|_{L^{2}}.\label{3.4}
		\end{flalign}
		We may take integer $m\geqslant\frac s2$, and use \eqref{3.3} together with Leibniz' formula to deduce
		\begin{equation*}
			\|(I-\Delta)^{\frac{s}{2}}u^1\|_{L^2}\leqslant\|(I-\Delta)^m(1-\varphi(\frac{x}{t}))e^{-it\mathscr{H}_0}u\|_{L^2}\leqslant C\arrowvert t\arrowvert^{-2}.
		\end{equation*}
		Therefore
		\begin{equation}\label{3.5}
			\int_{\arrowvert t\arrowvert>1}\|V_{x}u^1\|_{L^2}dt\leqslant C\int_{\arrowvert t\arrowvert>1}\arrowvert t\arrowvert^{-2}dt<\infty.
		\end{equation}
		
		For $u^2$, we have $\frac{r_{1}\arrowvert t\arrowvert}{4}<\arrowvert x\arrowvert<4r_{2}\arrowvert t\arrowvert$ in $\mathrm{supp}\,u^2$, then we have
		\begin{flalign}
			\|V_{x}u^2\|_{L^2(\mathbb{R}^{n})}\leqslant&C\|\langle x\rangle^{-\delta}u^2\|_{H^{s}(\mathbb{R}^{n})}\nonumber\\
			\leqslant&C\|\langle x\rangle^{-\delta}u^2\|_{L^{2}(\mathbb{R}^{n})}+C\|\langle x\rangle^{-\delta}(-\Delta)^{\frac{s}{2}}u^2\|_{L^{2}(\mathbb{R}^{n})}\nonumber\\
			\leqslant&C|t|^{-\delta}\|u^2\|_{H^{s}(\mathbb{R}^{n})},\delta>1,\label{3.6}
		\end{flalign}
where the first inequality holds because that we can use the same method as the proof of Lemma \ref{lemma2.2}. By taking integer $m\geqslant\frac s2$ and using Leibniz' formula, we also know from $\hat{u}\in C_c^\infty$ that
		\begin{equation*}
			\|u^2\|_{H^{s,-\sigma}}\leqslant\|(I-\Delta)^m\varphi(\frac{x}{t})e^{-it\mathscr{H}_0}u\|_{L^2}\leqslant C,
		\end{equation*}	
		and therefore
		\begin{equation*}
			\|V_{x}u^2\|_{L^2}\leqslant C\arrowvert t\arrowvert^{-\delta}.
		\end{equation*}
Finally, it follows from property of $V_{x}$ that
		\begin{flalign}
			\int_{\arrowvert t\arrowvert>1}\|V_{x}u^2\|_{L^2}\leqslant C\int_{\arrowvert t\arrowvert>1}\arrowvert t\arrowvert^{-\delta}dt<\infty.\label{3.7}
		\end{flalign}
		This completes the proof of \eqref{3.2} and the existence of wave operators. Hence the proof of Lemma \ref{lemma3.1} is completed.
	\end{proof}
	
	\subsection{The asymptotic completeness of the wave operators}
	
	In this subsection, we main study the scattering of fractional magnetic Schr\"{o}dinger operators $(-\Delta_{A})^{\frac{s}{2}}$. The highlight of this section is using the relationship between the distorted Fourier transforms and wave operators, we prove the asymptotic completeness of the wave operators $W_{\pm}$.
	
	Now, we will prove the asymptotic completeness of $W_{\pm}$ by applying the intertwining property of the distorted Fourier transforms in Lemma \ref{lemma2.9} and the properties of wave operators in Lemma \ref{lemma3.1}.
	\begin{lemma}\label{lemma3.2}
		$F_\pm^{A}W_\pm=\mathscr{F}$ holds in $L^2$.
	\end{lemma}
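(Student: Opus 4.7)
The proof will follow the classical Agmon--Kuroda stationary-method strategy: verify the identity on the dense subspace $\mathcal{D} := \mathscr{F}^{-1} C_c^\infty(\mathbb{R}^n \setminus \{0\})$ of $L^2$, then extend by density. For $u \in \mathcal{D}$, the function $e^{-it\mathscr{H}_0} u$ is Schwartz and thus lies in $L^{2,\sigma}$ for every $\sigma > 1/2$ and every $t \in \mathbb{R}$, so the intertwining property of Lemma \ref{lemma2.9} applies and gives $F_\pm^A e^{it\mathscr{H}} e^{-it\mathscr{H}_0} u = e^{it|\xi|^s} F_\pm^A e^{-it\mathscr{H}_0} u$ in $L^2_\xi$.

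Next I would invoke the algebraic identity $(I + V_x R_0^s(z))^{-1} = I - V_x R_A^s(z)$, which is a direct consequence of the second resolvent identity $R_0^s(z) - R_A^s(z) = R_0^s(z) V_x R_A^s(z)$. The limiting absorption bound $R_A^s(\lambda \pm i0) : L^{2,\sigma} \to H^{s,-\sigma}$ needed to make this meaningful is obtained from $R_A^s(z) = R_0^s(z)(I + V_x R_0^s(z))^{-1}$ by combining Lemma \ref{lemma2.7} with Lemma \ref{lemma2.8}. Together with $\mathscr{F}(e^{-it\mathscr{H}_0} u)(\xi) = e^{-it|\xi|^s} \hat{u}(\xi)$, this yields the core formula
\[
F_\pm^A e^{it\mathscr{H}} e^{-it\mathscr{H}_0} u(\xi) = \hat{u}(\xi) - e^{it|\xi|^s}\, \mathscr{F}\bigl(V_x R_A^s(|\xi|^s \pm i0) e^{-it\mathscr{H}_0} u\bigr)(\xi).
\]

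The crux of the proof, and the step I expect to be the main obstacle, is showing that the correction term on the right converges to $0$ in $L^2_\xi$ as $t \to \pm\infty$. The available tools are the stationary-phase localization \eqref{3.3} from the proof of Lemma \ref{lemma3.1}, which confines $e^{-it\mathscr{H}_0} u$ to the annulus $|x| \sim |t|$ and provides decay in negatively weighted $L^2$; the uniform limiting-absorption bound for $R_A^s(\lambda \pm i0)$ over compact subsets of $(0,\infty) \setminus \Lambda$; and the short-range property $V_x : H^{s,-\sigma} \to L^{2,\sigma}$ from Lemma \ref{lemma2.2}. The delicacy is that the correction term is the ``diagonal'' of a function of both $x$ and $\xi$, with $\xi$ entering through the Fourier transform in $x$ and simultaneously through the spectral parameter $|\xi|^s$ of $R_A^s$; decoupling these two roles will likely require a careful pointwise-in-$\xi$ estimate of $\|V_x R_A^s(|\xi|^s \pm i0) e^{-it\mathscr{H}_0} u\|_{L^1_x}$ combined with the compact support of $\hat u$ in $\mathbb{R}^n \setminus \{0\}$, which confines the effective $|\xi|^s$ to a compact subset of $(0,\infty) \setminus \Lambda$ where all bounds can be made uniform.

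Once the decay is established, passing to the limit $t \to \pm\infty$ in the displayed identity yields $\lim_{t \to \pm\infty} F_\pm^A e^{it\mathscr{H}} e^{-it\mathscr{H}_0} u = \hat{u} = \mathscr{F} u$ in $L^2$. Since $W_\pm u$ is the strong $L^2$-limit of $e^{it\mathscr{H}} e^{-it\mathscr{H}_0} u$, this identifies $F_\pm^A W_\pm u$ with $\mathscr{F} u$ for $u \in \mathcal{D}$. The density of $\mathcal{D}$ in $L^2$, combined with the isometry of $W_\pm$ from Lemma \ref{lemma3.1} and the unitarity of $\mathscr{F}$, then extends the identity to all of $L^2$.
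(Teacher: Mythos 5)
Your approach is genuinely different from the paper's, and while the first several steps are correct, the argument as written has a real gap at precisely the step you flag as the main obstacle.

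Where the two approaches diverge: the paper never tries to show that the correction term vanishes as $t\to\pm\infty$ for each fixed $t$. Instead it writes $W_-u=u-\int_{-\infty}^0 ie^{it\mathscr{H}}V_xe^{-it\mathscr{H}_0}u\,dt$ by Cook's method, applies $F_-^A$, pushes it through the (absolutely convergent, by \eqref{3.2}) time integral via the intertwining property, inserts an Abel factor $e^{\epsilon t}$, and pairs against test functions $\phi\in C_c^\infty(\mathbb{R}^n\setminus\bigcup_{\lambda\in\tilde\Lambda}M_\lambda)$. The Abel regularization converts the $t$-integral on each fiber $M_\lambda$ into the resolvent $R_0^s(\lambda-i\epsilon)u$, and then the cancellation is purely algebraic: $(I+V_xR_0^s(\lambda-i0))^{-1}(I+V_xR_0^s(\lambda-i\epsilon))u\to u$ as $\epsilon\downarrow 0$, by the continuity established in Lemma \ref{lemma2.8}. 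Crucially, the compactly supported test function $\phi$ confines the spectral parameter $\lambda$ to a compact subset of $\mathbb{R}^+\setminus\tilde\Lambda$, so only local uniformity of the resolvent bounds is needed. No decay estimate beyond \eqref{3.2} is required.

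Your route instead requires the genuine new estimate
\[
\bigl\|\mathscr{F}\bigl(V_xR_A^s(|\xi|^s\pm i0)\,e^{-it\mathscr{H}_0}u\bigr)(\xi)\bigr\|_{L^2_\xi}\longrightarrow 0\qquad(t\to\pm\infty),
\]
and this is where the proposal breaks down. The claim that ``the compact support of $\hat u$ in $\mathbb{R}^n\setminus\{0\}$ \dots confines the effective $|\xi|^s$ to a compact subset'' is not correct: the operator $V_xR_A^s(\lambda\pm i0)$ does not preserve compact Fourier support, so the gluing over $\lambda$ of traces on $M_\lambda$ produces an $L^2_\xi$ function supported on all of $\mathbb{R}^n$, and the $\lambda$-integration in the $L^2_\xi$ norm runs over $(0,\infty)$. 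To close the estimate you would need bounds on the trace constant and on $\|R_A^s(\lambda\pm i0)\|_{L^{2,\sigma}\to H^{s,-\sigma}}$ that are at least integrable in $\lambda$ against the Jacobian over the full ray $(0,\infty)$, with uniform control both near $\lambda=0$ and as $\lambda\to\infty$; none of Lemmas \ref{lemma2.4}--\ref{lemma2.8} provide such global-in-$\lambda$ information, only local bounds on compact subsets of $\mathbb{R}^+\setminus\tilde\Lambda$. So the decay step is not merely ``delicate'' — as currently sketched it cannot be closed with the tools developed in the paper. If you want to pursue a fixed-$t$ route, you would still need to introduce a spectral cutoff (analogous to the paper's test function $\phi$) before sending $t\to\pm\infty$, at which point the argument essentially collapses into the paper's Abel-limit computation.
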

	\begin{proof}
		Here, we just show the proof of  $F_-^{A}W_-=\mathscr{F}$, the other $F_+^{A}W_+=\mathscr{F}$ can use the same method to prove. If $u\in\mathscr{F}^{-1}C_c^\infty(\mathbb{R}^n\setminus\{0\})$, i.e. $e^{-it\mathscr{H}_0}u\in\mathscr{F}^{-1}C_c^\infty(\mathbb{R}^n\setminus\{0\})$ is infinitely differentiable in $L^2$, and we can differentiate $e^{it\mathscr{H}}e^{-it\mathscr{H}_{0}}u$ to get
		\begin{equation}\label{3.8}
			W_-u=u-\int_{-\infty}^0ie^{it\mathscr{H}}V_{x}e^{-it\mathscr{H}_0}udt.
		\end{equation}
		Indeed,
		$$\frac{d}{dt}(e^{it\mathscr{H}}e^{-it\mathscr{H}_0}u)=ie^{it\mathscr{H}}V_{x}e^{-it\mathscr{H}_0}u,$$ and integration by parts, this proves \eqref{3.8}. Since $F_-^{A}$ is $L^2$ continuous, we use the intertwining property \eqref{2.35} and the fact that the integral in \eqref{3.8} is absolutely convergent in $L^2$ by \eqref{3.2} in Lemma \ref{lemma3.1}, to deduce that
		\begin{flalign}
			F_-^{A}W_-u=&F_-^{A}(u-\int_{-\infty}^0ie^{it\mathscr{H}}V_{x}e^{-it\mathscr{H}_0}udt)\nonumber\\
			=&F_-^{A}u-\int_{-\infty}^0F_-^{A}(e^{it\mathscr{H}}iV_{x}e^{-it\mathscr{H}_0}u)dt\nonumber\\
			=&F_-^{A}u-\int_{-\infty}^0e^{it\arrowvert\xi\arrowvert^{s}}F_-^{A}(iV_{x}e^{-it\mathscr{H}_0}u)dt\nonumber\\
			=&F_-^{A}u-\lim_{\epsilon\downarrow0}\int_{-\infty}^0e^{\epsilon t+it\arrowvert\xi\arrowvert^s}F_-^{A}(iV_{x}e^{-it\mathscr{H}_0}u)dt.\label{3.9}
		\end{flalign}
		
		Now we take any $\phi\in C_c^\infty(\mathbb{R}^n\setminus\mathop{\bigcup}\limits_{\lambda\in\tilde{\Lambda}}M_\lambda)$, and assume that $\mathrm{supp}\,\phi\subset I\times\mathbb{S}^{n-1}$, where compact $I\subset\mathbb{R}^{+}\setminus\tilde{\Lambda}$. Note that $\lambda\geqslant c>0$ if $\lambda\in I$. We have
		\begin{flalign}
			&\left\langle\int_{-\infty}^0e^{\epsilon t+it\arrowvert\xi\arrowvert^s}F_-^{A}(iV_{x}e^{-it\mathscr{H}_0}u)dt,\phi\right\rangle\nonumber\\
			=&\int_{-\infty}^0\left\langle e^{\epsilon t+it\arrowvert\xi\arrowvert^s}F_-^{A}(iV_{x}e^{-it\mathscr{H}_0}u),\phi\right\rangle dt\nonumber\\
			=&\int_{-\infty}^0dt\int_{\lambda^\frac1s\in I}\frac{d\lambda}{s\lambda^\frac{s-1}{s}}\int_{M_\lambda}F_-^{A}(e^{\epsilon t+it\lambda}iV_{x}e^{-it\mathscr{H}_0}u)\bar{\phi}dS.\label{3.10}
		\end{flalign}
		Since $\lambda\in\mathbb{R}^{+}\setminus\tilde{\Lambda}$, by Lemma \ref{lemma2.8} we have
		\begin{flalign}
			&\|F_-^{A}(e^{\epsilon t+it\lambda}iV_{x}e^{-it\mathscr{H}_0}u)\|_{L^2(M_\lambda)}\nonumber\\
			\leqslant&C(\lambda)e^{\epsilon t}\|(I+V_{x}R_0^{s}(\lambda-i0))^{-1}V_{x}e^{-it\mathscr{H}_0}u\|_{L^{2,\sigma}}\nonumber\\
			\leqslant&C'(\lambda)e^{\epsilon t}\|(I+V_{x}R_0^{s}(\lambda-i0))^{-1}\|_{L^{2,\sigma}\rightarrow L^{2,\sigma}}\nonumber\\
			&\cdot\|V_{x}\|_{H^{s,-\sigma}\rightarrow L^{2,\sigma}}\cdot\|u\|_{H^{s,-\sigma}}\nonumber\\
			\leqslant&C''(\lambda)e^{\epsilon t},\label{3.11}
		\end{flalign}
		where $C''(\lambda)$ is locally bounded.
		
		Next we use Fubini's theorem to exchange the integrations and obtain that
		\begin{flalign}
			&\left\langle\int_{-\infty}^0e^{\epsilon t+it\arrowvert\xi\arrowvert^s}F_-^{A}(iV_{x}e^{-it\mathscr{H}_0}u)dt,\phi\right\rangle\nonumber\\
			=&\int_{\lambda^\frac1s\in I}\frac{d\lambda}{s\lambda^\frac{s-1}{s}}\int_{-\infty}^0dt\int_{M_\lambda}F_-^{A}(e^{\epsilon t+it\lambda}iV_{x}e^{-it\mathscr{H}_0}u)\bar{\phi}dS\nonumber\\
			=&\int_{\lambda^\frac1s\in I}\frac{d\lambda}{s\lambda^\frac{s-1}{s}}\int_{M_\lambda}\left(\int_{-\infty}^0F_-^{A}(e^{\epsilon t+it\lambda}iV_{x}e^{-it\mathscr{H}_0}u)\arrowvert_{M_\lambda}dt\right)\bar{\phi}dS,\label{3.12}
		\end{flalign}
		where the last line also comes from \eqref{3.11}, and $F_-^{A}(e^{\epsilon t+it\lambda}iV_{x}e^{-it\mathscr{H}_0}u)\arrowvert_{M_\lambda}$ denotes the $L^2$ trace on $M_\lambda$. For fixed $\lambda$, the second estimate of \eqref{3.11} implies that $\int_{-\infty}^0e^{\epsilon t+it\lambda}iV_{x}e^{-it\mathscr{H}_{0}}udt$ is absolutely convergent in $L^{2,\sigma}$; further, the map $u\mapsto F_-^{A}u\arrowvert_{M_\lambda}$ is continuous from $L^{2,\sigma}$ to $L^2(M_\lambda)$. We thus conclude that
		\begin{equation}\label{3.13}
			\int_{-\infty}^0F_-^{A}(e^{\epsilon t+it\lambda}iV_{x}e^{-it\mathscr{H}_0}u)\arrowvert_{M_\lambda}dt=F_-^{A}(\int_{-\infty}^0e^{\epsilon t+it\lambda}iV_{x}e^{-it\mathscr{H}_0}udt)\arrowvert_{M_\lambda}.
		\end{equation}
		Therefore,
		\begin{flalign}
			&\left\langle\int_{-\infty}^0e^{\epsilon t+it\arrowvert\cdot\arrowvert^s}F_-^{A}(iV_{x}e^{-it\mathscr{H}_0}u)dt,\phi\right\rangle\nonumber\\
			=&\int_{\lambda^\frac1s\in I}\frac{d\lambda}{s\lambda^\frac{s-1}{s}}\int_{M_\lambda}F_-^{A}\left(\mbox{$\int_{-\infty}^0e^{\epsilon t+it\lambda}iV_{x}e^{-it\mathscr{H}_0}udt$}\right)\bar{\phi}dS.\label{3.14}
		\end{flalign}
		
		Also notice that $V_{x}$ is continuous from $H^s$ to $L^2$, we have
		\begin{flalign}
			&\left\langle\int_{-\infty}^0e^{\epsilon t+it\arrowvert\cdot\arrowvert^s}F_-^{A}(iV_{x}e^{-it\mathscr{H}_0}u)dt,\phi\right\rangle\nonumber\\
			=&\int_{\lambda^\frac1s\in I}\frac{d\lambda}{s\lambda^\frac{s-1}{s}}\int_{M_\lambda}F_-^{A}V_{x}\left(\mbox{$\int_{-\infty}^0e^{\epsilon t+it\lambda}ie^{-it\mathscr{H}_0}udt$}\right)\bar{\phi}dS\nonumber\\
			=&\int_{\lambda^\frac1s\in I}\frac{d\lambda}{s\lambda^\frac{s-1}{s}}\int_{M_\lambda}(F_-^{A}V_{x}R_0^{s}(\lambda-i\epsilon)u)\bar{\phi}dS.\label{3.15}
		\end{flalign}
		
		Combining \eqref{3.9} and \eqref{3.15}, we obtain that
		\begin{flalign}
			&\left\arrowvert\langle F_-^{A}W_-u-\hat{u},\phi\rangle\right\arrowvert\nonumber\\
			\leqslant&\lim_{\epsilon\downarrow0}\int_{\lambda^\frac1s\in I}\frac{1}{s\lambda^\frac{s-1}{s}}\arrowvert\int_{M_\lambda}\mathscr{F}((I+V_{x}R_0^{s}(\lambda-i0))^{-1}\nonumber\\
			&(I+V_{x}R_0^{s}(\lambda-i\epsilon))u-u)\bar{\phi}dS\arrowvert d\lambda\nonumber\\
			\leqslant&\lim_{\epsilon\downarrow0}C_{I,\phi}\int_{\lambda^\frac1s\in I}\|(I+V_{x}R_0^{s}(\lambda-i0))^{-1}\nonumber\\
			&\cdot(I+V_{x}R_0^{s}(\lambda-i\epsilon))u-u\|_{L^{2,\sigma}}d\lambda\nonumber\\
			=&0,\label{3.16}
		\end{flalign}
		for $(I+V_{x}R_0^{s}(\lambda-i0))^{-1}(I+V_{x}R_0^{s}(\lambda-i\epsilon))u$ is continuous both in $\epsilon$ and in $\lambda$ by Lemma \ref{lemma2.8}. Finally, since $\mathscr{F}^{-1}C_c^\infty(\mathbb{R}^n\setminus\{0\})$ and $C_c^\infty(\mathbb{R}^n\setminus\mathop{\bigcup}\limits_{\lambda\in\tilde{\Lambda}}M_\lambda)$ are both dense in $L^2$, hence the proof of Lemma \ref{lemma3.2} is completed.
	\end{proof}
	
	\section{Absence of embedded eigenvalues for the Schr\"{o}dinger operator}
	In this section, we will discuss the properties of spectrum of operator $\mathscr{H}$ in the following theorem.

	\begin{theorem}\label{theorem4.1}
		Assume that $u\in L^{2}(\mathbb{R}^{n})$ is a solution of the equation
		$$(-\Delta_{A})^{\frac{s}{2}}u=\lambda u,$$
		where $\lambda>0$, $0<s<2$. Then there absence of embedded eigenvalues for the Schr\"{o}dinger operator $(-\Delta_{A})^{\frac{s}{2}}$.
	\end{theorem}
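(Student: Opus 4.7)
The plan is to reduce the statement to the already known absence of positive eigenvalues for the magnetic Schr\"odinger operator $-\Delta_A$, which is recalled from \cite{KT} in the introduction. Since $\mathscr{H}=(-\Delta_A)^{s/2}$ is defined as the self-adjoint realization of the fractional power of the nonnegative self-adjoint operator $-\Delta_A$, the two operators are linked by functional calculus, and eigenvalue relations transfer cleanly through the change of variables $\mu\mapsto\mu^{s/2}$.

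First, I would invoke the spectral theorem: let $\{E_\mu\}_{\mu\geqslant0}$ denote the spectral family of $-\Delta_A$. Then by definition $\mathscr{H}=\int_0^\infty\mu^{s/2}\,dE_\mu$, so the spectral family $\{E^{(s)}_\lambda\}_{\lambda\geqslant0}$ of $\mathscr{H}$ satisfies $E^{(s)}_\lambda=E_{\lambda^{2/s}}$ for $\lambda\geqslant0$. Therefore, for any Borel set $B\subset[0,\infty)$, the projection $E^{(s)}(B)$ equals $E(B^{2/s})$, where $B^{2/s}:=\{\mu^{2/s}:\mu\in B\}$. In particular, the projection onto $\{\lambda\}$ for $\mathscr{H}$ coincides with the projection onto $\{\lambda^{2/s}\}$ for $-\Delta_A$.

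Next, suppose $u\in L^2(\mathbb{R}^n)$ satisfies $\mathscr{H}u=\lambda u$ with $\lambda>0$. Then $u$ lies in the range of $E^{(s)}(\{\lambda\})$, so by the identification above $u=E(\{\lambda^{2/s}\})u$. This means $u$ is supported on a single point of the spectrum of $-\Delta_A$, and hence $u\in D(f(-\Delta_A))$ for every Borel function $f$; taking $f(\mu)=\mu$ gives $u\in D(-\Delta_A)$ and
\begin{equation*}
-\Delta_A u=\int_0^\infty\mu\,dE_\mu u=\lambda^{2/s}u.
\end{equation*}
Thus $u$ is an $L^2$ eigenfunction of $-\Delta_A$ with eigenvalue $\lambda^{2/s}>0$.

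Finally, the hypothesis \eqref{1.1} on $A$ together with the result of Koch--Tataru \cite{KT} recalled in the Introduction yields $\sigma_{pp}(-\Delta_A)\cap(0,\infty)=\emptyset$. Since $\lambda^{2/s}>0$, this forces $u=0$. The main step is the spectral-theoretic identification of the two eigenspaces; once it is in place the conclusion is immediate from \cite{KT}. No delicate analysis is required beyond checking that the functional calculus identity $E^{(s)}_\lambda=E_{\lambda^{2/s}}$ holds for the specific self-adjoint realizations used in the paper, which is standard for nonnegative self-adjoint operators.
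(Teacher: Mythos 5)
Your proof is correct, and it takes a genuinely cleaner route than the paper's. The paper works directly with the integral representation of the fractional power: assuming $(-\Delta_{A})^{s/2}u=\lambda_0 u$, it first argues (somewhat informally, via an iterative bootstrap and the embedding $H^{2s}\subset D(-\Delta_A)$) that $u\in D(-\Delta_A)$, then splits into cases according to whether $2/s$ is an integer. In the integer case it composes $(-\Delta_A)^{s/2}$ with itself $m_1=2/s$ times; in the non-integer case it writes $2/s=m_2+\alpha_1$ and invokes the Balakrishnan-type formula $(-\Delta_A)^{s\alpha_1/2}=c(\alpha_1)(-\Delta_A)^{s/2}\int_0^\infty\tau^{\alpha_1-1}(\tau+(-\Delta_A)^{s/2})^{-1}d\tau$ applied to the eigenfunction to deduce $(-\Delta_A)^{s\alpha_1/2}u=\lambda_0^{\alpha_1}u$. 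Either way it lands on $(-\Delta_A)u=\lambda_0^{2/s}u$ and contradicts Koch--Tataru. Your argument bypasses all of this: since $\mathscr{H}$ is \emph{defined} by functional calculus from the nonnegative self-adjoint $-\Delta_A$ and $\mu\mapsto\mu^{s/2}$ is a strictly increasing bijection of $[0,\infty)$, the spectral projections satisfy $E^{(s)}(\{\lambda\})=E(\{\lambda^{2/s}\})$, so the eigenspaces of $\mathscr{H}$ at $\lambda>0$ and of $-\Delta_A$ at $\lambda^{2/s}>0$ literally coincide, and $u\in D(-\Delta_A)$ comes for free from $u$ being supported at a single spectral point. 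This avoids the case split, avoids the domain bootstrap, and is the standard way such a transference is proved; the paper's route is more ``elementary'' in that it manipulates resolvent integrals by hand, but it re-derives a fact that the spectral theorem gives immediately and with less risk of error (e.g., the paper's description of $m_2=\lceil 2/s\rceil$ as ``an integer not greater than $2/s$'' is an artifact of that extra bookkeeping). Both proofs ultimately reduce to the same input from \cite{KT}.
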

	\begin{proof}
From the assumption, let $\lambda_{0}\neq0$ is an eigenvalue of the operator $(-\Delta_{A})^{\frac{s}{2}}$ with $0<s<2$, then we have  	
\begin{flalign}\label{4.1.1}
(-\Delta_{A})^{\frac{s}{2}}u=\lambda_{0}u.	
\end{flalign}
Combining $u\in L^{2}(\mathbb{R}^{n})$ and the identity \eqref{4.1.1}, applying the iterative method to \eqref{4.1.1}, we obtain $u\in D(-\Delta_{A})$. We can roughly describe this domain space. In fact, we know that $\|u\|^{2}_{H^{s}}=\|u\|^{2}_{L^{2}}+\|(-\Delta)^{\frac{s}{2}}u\|^{2}_{L^{2}}$, $\|u\|_{\dot{H}^{s}}$ is equivalent to $\|(-\Delta_{A})^{\frac{s}{2}}u\|_{L^{2}}$ with $0<s<\frac{n}{2}$, and  $\|(I+(-\Delta_{A}))^{\frac{s}{2}}u\|_{L^{2}}\leqslant C\|u\|_{H^{s}}$ with any $s>0$, then from \eqref{4.1.1} we conclude $H^{2s}(\mathbb{R}^{n})\subset D(-\Delta_{A})$. 	 	
		
Since $0<s<2$, we have $1<\frac{2}{s}<\infty$. Now according to the value of $\frac{2}{s}$, we discuss the situation as follows. When $\frac{2}{s}=m_{1}$ is an integer, we have 		
\begin{flalign}\label{4.1.2}		
(-\Delta_{A})u=(-\Delta_{A})^{\frac{s}{2}\cdot m_{1}}u=(-\Delta_{A})^{\frac{s}{2}}...(-\Delta_{A})^{\frac{s}{2}}u=\lambda_{0}^{m_{1}}u=\lambda_{0}^{\frac{2}{s}}u.
\end{flalign}		
When $\frac{2}{s}$ is not an integer, we can write $\frac{2}{s}=m_{2}+\alpha_{1}$ with $m_{2}=\lceil\frac{2}{s}\rceil$ means an integer not greater than $\frac{2}{s}$ and $\alpha_{1}\in(0,1)$ is not an integer. Then we obtain that 
\begin{flalign}\label{4.1.3}			
(-\Delta_{A})u=(-\Delta_{A})^{\frac{s}{2}\cdot(m_{2}+\alpha_{1})}u=(-\Delta_{A})^{\frac{s\alpha_{1}}{2}}(-\Delta_{A})^{\frac{s}{2}\cdot m_{2}}u=\lambda_{0}^{m_{2}}(-\Delta_{A})^{\frac{s\alpha_{1}}{2}}u,		
\end{flalign}			
recall
\begin{flalign*}
&(-\Delta_{A})^{\frac{s\alpha_{1}}{2}}u=((-\Delta_{A})^{\frac{s}{2}})^{\frac{2\alpha_{1}}{2}}u\\
=&(-\Delta_{A})^{\frac{s}{2}}c(\alpha_{1})\int_{0}^{\infty}\tau^{\frac{2\alpha_{1}}{2}-1}(\tau+(-\Delta_{A})^{\frac{s}{2}})^{-1}ud\tau\\
=&(-\Delta_{A})^{\frac{s}{2}}\lambda_{0}^{\frac{2\alpha_{1}}{2}-1}u=\lambda_{0}^{\alpha_{1}}u,
\end{flalign*}
substitute it into \eqref{4.1.3} to get
\begin{flalign}\label{4.1.4}			
(-\Delta_{A})u=\lambda_{0}^{m_{2}+\alpha_{1}}u=\lambda_{0}^{\frac{2}{s}}u.
\end{flalign}
Therefore, from \eqref{4.1.2} and \eqref{4.1.4}, we obtain that there exist  $\lambda_{0}^{\frac{2}{s}}\neq0$ is an eigenvalue of the operator $-\Delta_{A}$. However, we know that $-\Delta_{A}$ have no possible eigenvalue in \cite{KT}, then the assumption does not hold. That is to say, there are no embedded eigenvalues for the operator $(-\Delta_{A})^{\frac{s}{2}}$. Then the proof of Theorem \ref{theorem4.1} is completed.
\end{proof}
	
Now, combining these results above, we can prove Theorem \ref{theorem1.1}.
\begin{proof} [\textbf{Proof of Theorem \ref{theorem1.1}}]
On the one hand, Lemma \ref{lemma3.1} implies that wave operators exist. On the other hand, since $\mathscr{H}_{0}$ has only absolute continuous spectrum, a general fact (e.g. see \cite[P531]{K3}) is that $Ran(W_{\pm})\subset\mathscr{H}_{ac}$. Now Lemma \ref{lemma2.9} and Lemma \ref{lemma3.2} indicate that $E^{c}L^{2}=Ran(W_{\pm})$, therefore $E^{c}L^{2}\subset\mathscr{H}_{ac}$. Notice that $E^cL^2\supset\mathscr{H}_\mathrm{ac}\oplus\mathscr{H}_\mathrm{sc}$, hence $\mathscr{H}_\mathrm{sc}=\emptyset$, and $Ran(W_{+})=Ran(W_{-})=\mathscr{H}_{ac}$. This implies that the proof of \eqref{1.4} is completed, that is to say, we have the wave operators are asymptotical complete. Combining conclusion in Theorem \ref{theorem4.1}, therefore the proof of Theorem \ref{theorem1.1} is completed.
\end{proof}

	\vspace{2em}
	\phantomsection
	\addcontentsline{toc}{section}{References}
		
\end{document}